\numberwithin{equation}{section}
\theoremstyle{plain}
\newtheorem{Proposition}[equation]{Proposition}
\newtheorem{Corollary}[equation]{Corollary}
\newtheorem*{Corollary*}{Corollary}
\newtheorem{Theorem}[equation]{Theorem}
\newtheorem*{Theorem*}{Theorem}
\newtheorem{Lemma}[equation]{Lemma}
\theoremstyle{definition}
\newtheorem{Example}[equation]{Example}
\newtheorem{Remark}[equation]{Remark}
\def\C{\mathbb{C}}
\def\R{\mathbb{R}}
\def\D{\mathbb{D}}
\def\T{\mathbb{T}}
\def\N{\mathbb{N}}
\def\phi{\varphi}
\def\leq{\leqslant}
\def\geq{\geqslant}
\def\subset{\subseteq}
\newcommand{\wt}[1]{\widetilde{#1}}
\title{The range and valence of a real Smirnov function}
\author[Ferguson]{Timothy Ferguson}
	\address{Department of Mathematics, University of Alabama, Tuscaloosa, AL}
	\email{tjferguson1@ua.edu}
\author[Ross]{William T. Ross}
	\address{Department of Mathematics and Computer Science, University of Richmond, Richmond, VA 23173, USA}
	\email{wross@richmond.edu}
\keywords{Hardy spaces, real outer functions, valence, Riemann surfaces, conformal welding}
\subjclass[2010]{30J05, 30H10, 46E22}
\begin{document}

\begin{abstract}
  We give a complete description of the possible
  ranges of real Smirnov functions (quotients of two bounded analytic
  functions on the open unit disk where the denominator is outer and
  such that the radial boundary values are real almost everywhere on
  the unit circle). Our techniques use the theory of
  unbounded symmetric Toeplitz operators, some general theory of
  unbounded symmetric operators, classical Hardy spaces, and an
  application of the uniformization theorem. In addition, we
  completely characterize the possible valences for these real Smirnov
  functions when the valence is finite. To do so we construct
  Riemann surfaces we call disk trees by welding together copies of
  the unit disk and its complement in the Riemann sphere.
  We also make use of certain trees we call valence trees that
  mirror the structure of disk trees. 
\end{abstract}

\maketitle

\section{Introduction}
This paper explores the range and valence of real Smirnov functions. Smirnov functions, a well studied class of functions \cite{Duren}, are analytic functions on the open unit disk $\D$ which can be written as the quotient of two bounded analytic functions where the denominator is an outer function. Real Smirnov functions, studied in \cite{GMR, MR2021044, Helson, Helson2, MR1889082}, are those Smirnov functions which have real boundary values almost everywhere.  In a nutshell, we will characterize all possible ranges of such functions and all possible finite valences on their range. The two main theorems of this paper (terminology, motivation, and plenty of examples  to be reviewed below) are the following: 

\begin{Theorem*}
If $\phi$ is a non-constant real Smirnov function, then $\phi(\D)$ is either 
$$\phi(\D) =  \C_{+} \setminus F \; \; \mbox{or} \; \; \phi(\D) = \C_{-} \setminus G \; \; \mbox{or} \; \; \phi(\D) = \C \setminus (F \cup G \cup E),$$
 where $E \subsetneq \R$ and closed, $F \subset \C_{+}$ is relatively closed and has  
capacity zero, and $G \subset \C_{-}$ is relatively closed and has capacity zero. 
Moreover, given any closed $E \subsetneq \R$, any relatively closed $F \subset \C_{+}$ of  capacity zero, and any relatively closed 
$G \subset \C_{-}$ of capacity zero,  there are real Smirnov functions
with ranges  $\C_{+} \setminus F$, $\C_{-} \setminus G$, 
and $\C \setminus (E \cup F \cup G)$.
\end{Theorem*}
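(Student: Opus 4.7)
The plan is to handle the necessity and sufficiency directions by different techniques, both anchored in a reduction to inner function theory via the Cayley transform of the upper half-plane.

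For necessity, the first step is to study $v := \operatorname{Im}\phi$: as a harmonic function on $\D$ with vanishing nontangential boundary values almost everywhere, and because $\phi \in N^+$, I would use the canonical factorization of Smirnov functions to write $v = h_+ - h_-$ with $h_\pm \geq 0$ harmonic on $\D$. Three cases arise: $h_- \equiv 0$ gives $v > 0$ by the minimum principle, hence $\phi(\D) \subset \C_+$; symmetrically if $h_+ \equiv 0$; otherwise $v$ changes sign, and since $\phi(\D)$ is open and connected it must meet both half-planes and the real line. To prove the capacity-zero claim in the half-plane case, set $\psi := C \circ \phi$ where $C(w) = (w-i)/(w+i)$: this map is analytic $\D \to \D$ with $|\psi| = 1$ a.e.\ on $\T$ (because $\phi$ has real boundary values), hence inner. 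The omitted values of $\psi$ in $\D$ are exactly $C(F)$ for $F = \C_+ \setminus \phi(\D)$, and Frostman's theorem on the exceptional set of an inner function gives $C(F)$ logarithmic capacity zero, hence so does $F$. The third case would be reduced to the first by restricting $\phi$ to each connected component $U$ of $\{v > 0\}$: if $\tau \colon \D \to U$ is a universal covering map, then $\phi \circ \tau \colon \D \to \C_+$ is analytic with real boundary values (the boundary of $U$ inside $\D$ lies in $\{v = 0\}$ where $\phi$ is real, and on $\T$ the function $\phi$ itself has real boundary values a.e.), so the Cayley--Frostman argument gives $\C_+ \setminus \phi(U)$ polar; intersecting over $U$ yields $F$ polar, and $G$ is handled symmetrically.

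For sufficiency, I would invert this construction. In the target $\C_+ \setminus F$ case, let $\pi \colon \D \to \C_+ \setminus F$ be a universal covering map; since $F$ is polar, it carries zero harmonic measure in $\C_+ \setminus F$, which translates into $\pi$ having boundary values in $\R \cup \{\infty\}$ a.e.\ on $\T$, equivalently $C \circ \pi$ is inner. From this, with a careful choice of base point or post-composition by a disk automorphism, one would deduce $\pi = i(1+\psi)/(1-\psi)$ lies in $N^+$, giving a real Smirnov function with the prescribed range. For the general target $\Omega = \C \setminus (E \cup F \cup G)$, I would use the conformal welding machinery outlined in the abstract: build a simply connected Riemann surface (a ``disk tree'') by welding countably many copies of $\ov{\C_+}$ and $\ov{\C_-}$ along the components of $\R \setminus E$ in a prescribed tree-like combinatorial pattern, uniformize it by $\D$, and compose with the natural projection to $\Omega$.

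The main obstacle, in both directions, is upgrading ``analytic with real boundary values'' to the Smirnov property. For an inner function $\psi$, the quotient $i(1+\psi)/(1-\psi)$ is automatically analytic on $\D$ with real boundary values, but it lies in $N^+$ only when $1-\psi$ (equivalently $1+\psi$) carries no nontrivial singular inner factor, and this is not automatic for the universal cover associated to a general polar $F$. The hard part will be either choosing the cover so that these singular factors vanish or appealing to the operator-theoretic framework of unbounded symmetric Toeplitz operators mentioned in the abstract to bypass the explicit factorization. A secondary but substantial difficulty is controlling the combinatorics of the welding in Case~3 (via the valence trees) so that the resulting surface uniformizes to $\D$ rather than $\C$ and projects to $\Omega$ with exactly the prescribed omitted sets.
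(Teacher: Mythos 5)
Your necessity argument runs along a genuinely different route from the paper's (which uses the symmetric Toeplitz operator $T_\phi$, Beurling's theorem, and Rudin's generalization of Frostman), and the Cayley--Frostman core of it is sound; but the opening step is not: a decomposition $\operatorname{Im}\phi = h_+ - h_-$ with $h_\pm \geq 0$ harmonic would force $\operatorname{Im}\phi$ to have uniformly bounded $L^1$ integral means (Herglotz), which already fails for $\phi(z) = \left(\tfrac{1+z}{1-z}\right)^4 \in N^{+}_{\R}$. Fortunately the trichotomy you want there follows from connectedness and openness of $\phi(\D)$ alone, so that step can simply be deleted; the remaining issues (that the boundary-value distribution of the covering map $\tau$ of a component $U$ charges only the part of $\partial U$ where $\phi$ is real) are real but repairable.

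The genuine gap is in the sufficiency direction, exactly at the point you flag as ``the main obstacle'': you never establish that the constructed maps lie in $N^{+}$, and neither of your proposed escapes (choosing the cover to kill singular inner factors, or the operator-theoretic framework) is developed or is in fact how the difficulty is resolved. The resolution is elementary: a covering map onto a subdomain of a half-plane lies in $H^p$ for all $p \in (0,1)$, and $\bigcup_{p>0} H^p \subset N^{+}$, so the Smirnov property is automatic and no factorization of $1-\psi$ is ever needed. The paper exploits this by pulling the entire configuration back under $z \mapsto z^{1/4}$ into $\{\Re z > 0\}$, forming $\Omega = \{\Re z>0\} \setminus (\wt{E}_1 \cup \wt{E}_2 \cup \wt{E}_3 \cup \wt{F} \cup \wt{G})$, taking the covering map $\psi:\D\to\Omega$, and setting $\phi = \psi^4 \in H^p$ for $p < \tfrac14$. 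This also repairs the second defect in your plan: welding copies of $\ov{\C_{+}}$ and $\ov{\C_{-}}$ along the components of $\R\setminus E$ is the paper's device for the \emph{valence} theorem with finitely many intervals, but it does not obviously realize an arbitrary closed $E \subsetneq \R$ (whose complement may have infinitely many, even densely ordered, components), and it offers no mechanism for deleting the capacity-zero sets $F$ and $G$. Finally, even granting the construction, one must still show that the a.e.\ radial boundary values of $\psi$ avoid $\wt{F} \cup \wt{G}$, so that $\phi = \psi^4$ is real a.e.\ on $\T$; this requires the $H^p$ variant of Fisher's lemma (Lemma~\ref{99w8w8w8w8w+}), which is absent from your outline.
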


\begin{Theorem*}
The valence of every real Smirnov function with finite valence is given 
by the valence of a plane valence tree, and any valence arising from a 
plane valence tree is the valence of a real Smirnov function.
\end{Theorem*}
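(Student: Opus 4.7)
The plan is to establish a correspondence between the valences of finite-valence real Smirnov functions and the valences of plane valence trees, with the disk-tree Riemann surface from the abstract serving as the intermediary.

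For the forward direction, let $\phi$ be a non-constant real Smirnov function with finite valence. I would first decompose $\D$ into the connected components of $\phi^{-1}(\C_+)$ (call them \emph{upper tiles}) and $\phi^{-1}(\C_-)$ (\emph{lower tiles}), separated by the real-analytic set $\phi^{-1}(\R) \cap \D$. Each tile $U$ is simply connected (using simple connectivity of $\D$), and $\phi|_U$ is a conformal equivalence onto an open subset of $\C_{\pm}$ (using finite valence, which forces $\phi|_U$ to be unramified and finite-sheeted, and hence injective over the simply connected target). I would then form the graph $T_\phi$ whose vertices are the tiles and whose edges join pairs of tiles sharing a boundary arc---either inside $\D$ where $\phi$ crosses $\R$, or on $\T$ where $\phi$'s real boundary values fill out a common real interval, which is used to label the edge. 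Simple connectivity of $\D$ together with the bipartite upper/lower coloring force $T_\phi$ to be a tree, and the cyclic order of boundary arcs around each tile supplies a planar embedding, producing a plane valence tree. A direct count of tiles lying over a given $w \in \C$ shows that the valence of $\phi$ at $w$ equals the combinatorial valence of $T_\phi$ at $w$.

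For the converse, given a plane valence tree $T$, I would carry out a conformal welding to build its disk tree $\mathcal{R}_T$: attach one copy of $\overline{\C_+}$ to each upper vertex and one copy of $\overline{\C_-}$ to each lower vertex, and glue two adjacent copies along the real interval labeling the edge between them via the identity map on $\R$. The result is a simply connected Riemann surface with a tautological projection $\pi : \mathcal{R}_T \to \C$ whose sheet count above any $w$ matches the tree-theoretic valence of $T$ at $w$. I would uniformize $\mathcal{R}_T$ by $\D$ via some $\psi : \D \to \mathcal{R}_T$ and set $\phi := \pi \circ \psi$; the boundary values of $\phi$ are real almost everywhere because arcs of $\T$ must map under $\psi$ to boundary pieces of the sheets lying over $\R$, while the valence equality is read off from $\pi$. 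The Smirnov property would follow from local meromorphic extension across each (real-analytic) glued arc combined with a global argument producing an outer denominator from the Riemann maps of the individual sheets.

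The principal obstacle is the converse, specifically verifying both that $\mathcal{R}_T$ is hyperbolic (so that its uniformization is by $\D$ rather than $\C$) and that the resulting $\phi$ is genuinely of Smirnov class. Hyperbolicity is where the \emph{plane} hypothesis on the valence tree plays its essential role: it forces the ideal boundary of $\mathcal{R}_T$ to be large enough to support a Green's function, paralleling the capacity-zero condition on $F$ and $G$ in the first theorem of the paper. Establishing the Smirnov property then reduces to controlling the growth of $\phi$ near the pre-image of this ideal boundary, which I would expect to accomplish by producing an explicit outer denominator built from the Riemann maps of the individual sheets of $\mathcal{R}_T$.
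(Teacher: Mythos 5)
There are genuine gaps, and the central one is your assumption that each ``tile'' (connected component of $\phi^{-1}(\C_+)$ or $\phi^{-1}(\C_-)$) maps injectively onto its image. Finite valence does not make $\phi|_U$ unramified: $\phi$ can have critical points with critical values in $\C_+$, as for $\phi(z)=\psi(z^2)$ with $\psi=-i(z+1)/(z-1)$, which maps the single tile $U=\D$ two-to-one onto $\C_+$ with a branch point over $i$. By the argument principle each region carries a well-defined valence $m\geq 1$, and this is exactly why the nodes of a plane valence tree are labeled with a valence and why the disk trees in the paper use $z\mapsto z^m$ on each copy and weld by $z\mapsto z^{n/m}$. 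Your converse construction --- one copy of $\overline{\C_{\pm}}$ per node, glued by the identity, with a tautological projection --- can only realize trees all of whose nodes have valence $1$, so it misses most of the theorem. A second structural gap: your tile graph need not be a tree. At an interior branch point of $\phi^{-1}(\R)$ with $2k\geq 4$ arcs, the surrounding tiles alternate upper/lower and the shared boundary arcs produce a $2k$-cycle in the graph you define. The paper avoids this by merging all upper regions that share interior boundary points into a single ``upper collection'' node (whose valence is the sum of the valences of its members) and then proving treeness by following boundary arcs from branch point to branch point until they reach $\T$; some such device is unavoidable.

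The two issues you flag as obstacles are real but your proposed resolutions point in the wrong direction. Membership in $N^+$ does not require constructing an outer denominator: an $m$-valent analytic function on $\D$ lies in $H^p$ for every $p\in(0,\tfrac{1}{2m})$ (Cartwright/Hayman), hence in $N^+$, so finiteness of the valence does all the work. Hyperbolicity of the welded surface is not a Green's function or capacity issue tied to the word ``plane'' (which in the paper refers to the half-plane labels, not to a planar embedding); it comes from the free-interval condition in the definition of a plane valence tree: one can weld yet another disk onto a free arc and still have a simply connected surface, and removing an infinite set from the sphere, the plane, or the disk so as to leave a simply connected domain forces that domain to be conformally the disk. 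Finally, ``arcs of $\T$ must map to boundary pieces over $\R$'' needs an argument: one shows via the covering/evenly-covered-neighborhood argument that radial limits of the uniformizing map land on the boundary of the surface, and one must excise the capacity-zero set $\{i,-i\}$ (images of the ramification points $0$ and $\infty$) using the paper's Lemma on boundary values omitting sets of capacity zero.
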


The inspiration for this paper, and what informs our results, comes from the study of unbounded Toeplitz operators on the Hardy space $H^2$ of the open unit disk $\D$ (see \cite{MR3004956, MR2418122} and below). Here, for a general analytic function $\phi$ on $\D$, one can define the Toeplitz operator 
$$T_{\phi}: \mathcal{D}(T_{\phi}) \to H^2, \quad T_{\phi} f = \phi f,$$
where $\mathcal{D}(T_{\phi})$, the domain of $T_{\phi}$, is defined by 
$$\mathcal{D}(T_{\phi}) = \{f \in H^2: \phi f \in H^2\}.$$ Sarason \cite{MR2418122} showed that $\mathcal{D}(T_{\phi}) \not = \{0\}$ if and only if 
\begin{equation}\label{99sdhsfh}
\phi = \frac{b}{a},
\end{equation} where $b$ and $a$ are bounded analytic functions on $\D$ and $a$ has no zeros. Such $\phi$ comprise the well-known {\em Nevanlinna class} $N$ \cite{Duren}. It can also be arranged so that 
$$a(0) > 0 \; \; \mbox{and} \; \; |a|^2 + |b|^2 = 1$$ almost everywhere on the unit circle $\T$. With these normalizing conditions, this representation is unique. In the same paper, Sarason also showed that $\mathcal{D}(T_{\phi})$ is dense in $H^2$ if and only $a$ in \eqref{99sdhsfh} is an outer function. These $\phi$ comprise the {\em Smirnov class} $N^{+}$. Observe from \cite[Ch.~2]{Duren} that 
\begin{equation}\label{bcvcdf}
\bigcup_{p > 0} H^p \subset N^{+},
\end{equation}
 where $H^p$, the {\em Hardy classes}, are the analytic functions $f$ on $\D$ for which the $p$-integral means
\begin{equation}\label{Mp}
M_{p}(r, f) := \left(\int_{0}^{2 \pi} |f(r e^{i t})|^p \frac{d t}{2 \pi}\right)^{1/p}
\end{equation} are uniformly bounded for $r \in [0, 1)$. 

Classical theorems of Fatou and Riesz \cite[Ch.1, 2]{Duren} say that for each $\phi \in N^{+}$ the radial limit 
\begin{equation}\label{radiallimit}
\phi(e^{i t}) := \lim_{r \to 1^{-}} \phi(r e^{i t})
\end{equation}
 exists (and is non-zero) for almost every $t \in [0, 2 \pi]$. 
We say $\phi \in N^{+}$ belongs to the {\em real Smirnov class} $N^{+}_{\R}$ if
$$\phi(e^{i t}) \in \R$$ 
for almost every $t$ (see some examples below). 
These real Smirnov functions have been studied in \cite{GMR, MR2021044, Helson, Helson2, MR1889082} and a full characterization of them was given by Helson \cite{Helson, Helson2} as 
\begin{equation}\label{sdhfjsd;gfgee2}
\phi \in N^{+}_{\R} \iff \phi = i \frac{u + v}{u - v},
\end{equation}
 where $u$ and $v$ are inner functions and $u - v$ is an outer function.

When $\phi \in N^{+}_{\R}$ and 
$$\langle f, g \rangle = \int_{0}^{2 \pi} f(e^{i t}) \overline{g(e^{i t})} \frac{d t}{2 \pi}$$
denotes the usual inner product  
on $H^2$ (considered in the usual way, via radial limit functions, as a closed subspace of $L^2$), one can use
the fact that  $\phi(e^{i t})$ is real for almost every $t$ to see that $$\langle T_{\phi} f, g\rangle = \langle f, T_{\phi} g\rangle, \quad f, g \in \mathcal{D}(T_{\phi}).$$ In other words, $T_{\phi}$ is a densely defined symmetric operator on $H^2$.

Standard results from the theory of unbounded symmetric operators  \cite[Vol. II, Ch.~VII]{A-G} show that when $\phi \in N^{+}_{\R}$ and $\lambda \not \in \R$, the densely defined operator $T_{\phi} - \lambda I$ has closed range and the deficiency numbers
\begin{equation}\label{668wyeuhfjw}
d_{\phi}(\lambda) := \operatorname{dim}(\operatorname{Rng}(T_{\phi} - \lambda I))^{\perp},
\end{equation} 
where $\operatorname{Rng}$ denotes the range of an operator,
are constant on each of the half planes 
$$\C_{+} = \{z: \Im z > 0\}, \; \; \C_{-} = \{z: \Im z < 0\}.$$ Moreover, given a pair $(m, n)$, where $m, n \in \N_{0} \cup \{\infty\}$, there is a $\phi \in N^{+}_{\R}$ with 
$$(d_{\phi}(i), d_{\phi}(-i)) = (m, n).$$ It is also the case that both deficiency numbers are finite if and only if $\phi \in N^{+}_{\R}$ is a rational function.

To get to our discussion of the range of $\phi$, the focus of this paper, we unpack this a bit further as was done in \cite{Helson}. Observe that $\operatorname{Rng}(T_{\phi} - \lambda I)$ is not only a closed subspace of $H^2$ but it is also invariant under the shift operator $Sf = z f$ on $H^2$ and thus, by Beurling's theorem \cite[Ch.~7]{Duren},
$$\operatorname{Rng}(T_{\phi} - \lambda I) = \Theta H^2$$
for some inner function $\Theta$.
Let $\Theta_{\lambda}$ denote the inner factor of $\phi - \lambda$. 
All functions in
$\Theta H^2$ have $\Theta_{\lambda}$ as a divisor.
Moreover, since we can write $\phi - \lambda = b/a$ where $b$ and $a$ are in
$H^\infty$ and $a$ is outer, it follows that the inner factor of
$(\phi - \lambda)$ is the inner factor of $b$, and thus the
inner factor of $(\phi - \lambda)a$ is precisely $\Theta_{\lambda}$.
Thus $\Theta = \Theta_\lambda$. 
This means that 
$$(\operatorname{Rng}(T_{\phi} - \lambda I))^{\perp} = (\Theta_{\lambda} H^2)^{\perp}$$ which is a model space \cite{MR1761913, MR0270196, MR3526203, MR1895624}, a typical invariant subspace for the backward shift operator $S^{*}$. Moreover, the model space $(\Theta_{\lambda} H^2)^{\perp}$ has finite dimension $n$ if and only if $\Theta_{\lambda}$ is a finite Blaschke product of degree $n$. Since 
$$\ker(T^{*}_{\phi}  - \overline{\lambda} I) = (\operatorname{Rng}(T_{\phi} - \lambda I))^{\perp} = (\Theta_{\lambda} H^2)^{\perp},$$
and for each $w \in \D$,
$$T^{*}_{\phi} k_{w} = \overline{\phi(w)} k_{w}, \quad k_{w}(z) = \frac{1}{1 - \overline{w} z},$$
we see that 
$$\bigvee \{k_{w}: \phi(w) = \lambda\} \subset \ker(T^{*}_{\phi} - \overline{\lambda} I) = (\Theta_{\lambda} H^2)^{\perp}.$$
In the above, $\bigvee$ denotes the closed linear span. 
Hence, using the fact that $\Theta_{\lambda}$ is the inner factor for $\phi - \lambda$, we see that  for $\lambda \in \C \setminus \R$,
$$\lambda \in \phi(\D) \iff \Theta_{\lambda}(w) = 0 \; \; \mbox{for some $w \in \D$}.$$
 Furthermore, the valence 
 \begin{equation}\label{bbbbbbx}
v_{\phi}(\lambda) := \operatorname{card}\{w \in \D: \phi(w) = \lambda\}
\end{equation} will be the degree of the Blaschke factor of $\Theta_{\lambda}$.
For example, if the inner factor of $\Theta_{\lambda}$ is either a unimodular constant or a singular inner function (which will have no zeros in $\D$), then $\lambda \not \in \phi(\D)$, i.e., $v_{\phi}(\lambda) = 0$.  On the other hand, if $\Theta_{\lambda}$ has is an infinite Blaschke factor, then $v_{\phi}(\lambda) = \infty$. Note that 
$$v_{\phi}(\lambda) \leq d_{\phi}(\lambda), \quad \lambda \not \in \R,$$ but equality does not always hold. For example, $\Theta_{\lambda}$ might be the product of a finite Blaschke product of degree $n$ and a singular inner function. In this case $v_{\phi}(\lambda) = n$ while $d_{\phi}(\lambda) = \infty$.

Thus, characterizing
the range of $\phi$ will involve a discussion of the $\lambda \in \C \setminus \R$ such that $\phi - \lambda$  has a non-trivial Blaschke factor.
Rudin \cite{MR0235151}, generalizing a classical theorem of Frostman \cite[p.~37]{C-L}, showed that for nearly all $\lambda \in \C \setminus \R$, the inner factor of $\phi - \lambda$ is a Blaschke product. Here ``nearly all'' means that this property holds with the possible exceptional set of logarithmic capacity (capacity for short) zero. See \cite{Fisher, MR1334766} for basic facts about logarithmic capacity and see \cite{MR779463, GRM, MR2986324} for more on Blaschke products. 

In the above, we are allowing a unimodular constant to count as a Blaschke factor (of order zero). In this degenerate case we see that $\Theta_{\lambda} \equiv \xi$ for some $\xi \in \T$ and so 
$$(\operatorname{Rng}(T_{\phi} - \lambda I))^{\perp} = (\xi H^2)^{\perp} = \{0\}.$$ Thus if $\Theta_{\lambda}$ is a unimodular constant function for one $\lambda \in \C_{+}$ (or one $\lambda \in \C_{-})$ then, since $d_{\phi}$ is constant on each of $\C_{+}$ or $\C_{-}$, it follows that $\Theta_{\lambda}$ is a constant unimodular function for all $\lambda \in \C_{+}$ (or all $\lambda \in \C_{-}$). If $\Theta_{\lambda}$ is a constant unimodular function for one $\lambda \in \C_{+}$, then $\phi(\D) \cap \C_{+}  = \varnothing$ (similarly for some $\lambda \in \C_{-}$ and hence $\phi(\D) \cap \C_{-} = \varnothing$). Hence, for example, if $\phi(\D) \cap \C_{+}$ omits an open disk about $\lambda \in \C_{+}$, then $\phi - \lambda$ is an outer function, i.e., $\Theta_{\lambda}$ is a constant unimodular function. In this case the above discussion implies that $\phi(\D) \cap \C_{+} = \varnothing$. 

Thus, using the above analysis, along with the fact that $\phi(\D)$ is an open connected subset of $\C$ (open mapping theorem), we have the following possibilities for the range of $\phi \in N^{+}_{\R}$:
\begin{equation}\label{789ruwoiepdws}
\phi(\D) =  \C_{+} \setminus F \; \; \mbox{or} \; \; \phi(\D) = \C_{-} \setminus G \; \; \mbox{or} \; \; \phi(\D) = \C \setminus (F \cup G \cup E),
\end{equation}
where $F \subset \C_{+}$ and $G \subset \C_{-}$ are relatively closed subsets of capacity zero and $E \subsetneq \R$ is closed. The question we ask and answer in this paper is whether or not we can actually obtain all of these possibilities. 
In addition, we also discuss the valance on these ranges. 

To give the reader a feel for where we are heading, let us consider a few simple examples of ranges of $\phi \in N_{\R}^{+}$. 

\begin{Example}
If 
$$\phi_1(z) = i \frac{1 + z}{1 - z},$$ 
then $\phi_1 \in N^{+}$ (since it is the 
quotient of two bounded analytic functions 
and the denominator $1 - z$ is outer) and 
$$\phi_1(e^{i t}) = - \cot(t/2) \in \R, $$ 
which says that $\phi_1 \in N_{\R}^{+}$. 
Furthermore, $\phi_1(\D) = \C_{+}$. 
In a similar way, we see that if 
$$\phi_2(z)  = -i \frac{1 + z}{1 - z},$$ then $\phi_2 \in N_{\R}^{+}$ and $\phi_2(\D) = \C_{-}$. 

If $\theta$ is the singular inner function
$$\theta(z) = \exp\left(\frac{1 + z}{1 - z}\right),$$
then $\theta(\D) = \D \setminus \{0\}$ and thus if $\psi_{1} := \phi_1 \circ \theta$, then $\phi_1 \in N^{+}_{\R}$ and 
$\psi_{1}(\D) = \C_{+} \setminus \{i\}$. Observe that the singleton $\{i\}$ has capacity zero \cite[p.~140]{MR1334766}. 

Given a relatively closed subset $W \subset \D$ of capacity zero, there is an inner function $\sigma$ such that $\sigma(\D) = \D \setminus W$ \cite{C-L}. Then $\psi_2 = \phi_1 \circ \sigma \in N^{+}_{\R}$ and $\psi_{2}(\D) = \C_{+} \setminus F$, where $F = \psi_1(W)$ has capacity zero. 
\end{Example}

\begin{Example}
If 
$$\phi_3(z) = \left(\frac{1 + z}{1 - z}\right)^4$$ then 
$$\phi_{3}(e^{i t}) = \cot^{4}(t/2) \in \R$$ and, 
since 
$$z \mapsto \frac{1 + z}{1 - z}$$
maps $\D$ onto the right-half plane $\{z: \Re z > 0\}$, then $\phi_{3}(\D) = \C \setminus \{0\}$. 
\end{Example}

\begin{Example}
If 
\begin{equation*}%
\phi_4(z) = \frac{z}{(1 - z)^2},
\end{equation*}
the well-known Koebe function, then 
$$\phi_4(e^{i t}) = -\frac{1}{2} \frac{1}{1 - \cos t} \in \R$$ and 
$\phi_4(\D)$ is the single slit domain $\C \setminus (-\infty, -\tfrac{1}{4}]$.
\end{Example}

 \begin{Example}
 If 
$$\phi_5(z) = \frac{i z}{1 - z^2},$$
then 
$$\phi_5(e^{i t}) = - \frac{1}{2} \csc t \in \R$$ and $\phi_5(\D)$ turns out to be the double slit domain 
$$\C  \setminus ((-\infty, -\tfrac{1}{2}] \cup [\tfrac{1}{2}, \infty)).$$
\end{Example}

Of course one can compose any of the functions $\phi_j$ from these examples with interesting inner functions, like was done in the first example, to obtain ranges taking the form $\C_{+} \setminus F$, $\C_{-} \setminus G$, and $\C \setminus (F \cup G \cup E)$ for relatively closed sets $F$ and $G$ of capacity zero and a closed set $E \subsetneq \R$.  Can we obtain all of these possibilities as ranges for given $E, F, G$? 

\section{The main range result}

Our main result about the range of $\phi \in N_{\R}^{+}$ is the following: 

\begin{Theorem}\label{MT}
If $\phi \in N^{+}_{\R}$  and non-constant, then $\phi(\D)$ is either 
$$\phi(\D) =  \C_{+} \setminus F \; \; \mbox{or} \; \; \phi(\D) = \C_{-} \setminus G \; \; \mbox{or} \; \; \phi(\D) = \C \setminus (F \cup G \cup E),$$
 where $E \subsetneq \R$ and closed, $F \subset \C_{+}$ is relatively closed and has  
capacity zero, and $G \subset \C_{-}$ is relatively closed and has capacity zero. 
Moreover, given any closed $E \subsetneq \R$, any relatively closed $F \subset \C_{+}$ of  capacity zero, and any relatively closed 
$G \subset \C_{-}$ of capacity zero,  there are functions in $ N^{+}_{\R}$ 
with ranges  $\C_{+} \setminus F$, $\C_{-} \setminus G$, 
and $\C \setminus (E \cup F \cup G)$.
\end{Theorem}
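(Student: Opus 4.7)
The first half of the theorem---that every non-constant $\phi\in N^{+}_{\R}$ has a range of one of the three listed forms---is essentially the content of the discussion culminating in \eqref{789ruwoiepdws}. The open mapping theorem shows that $\phi(\D)$ is a non-empty, open, connected subset of $\C$. Because the deficiency number $d_\phi$ is constant on each of $\C_{+}$ and $\C_{-}$, and because Rudin's refinement of Frostman's theorem forces the exceptional set on which $\Theta_\lambda$ fails to have a Blaschke factor to have capacity zero, the intersection $\phi(\D)\cap\C_{+}$ is either empty or of the form $\C_{+}\setminus F$ with $F$ relatively closed of capacity zero, and symmetrically for $\C_{-}$. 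Connectedness of $\phi(\D)$ then forces $E\subsetneq\R$ in the mixed case, since otherwise $\R$ would separate $\phi(\D)$ into its two half-plane pieces.

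For the realization of the range $\C_{+}\setminus F$, with $\C_{-}\setminus G$ handled symmetrically by complex conjugation, I would adapt the construction from the first example above: set $W:=\phi_1^{-1}(F)\subset\D$, which is relatively closed and of capacity zero since $\phi_1(z)=i(1+z)/(1-z)$ is a M\"obius map. The construction cited in \cite{C-L} gives an inner function $\sigma$ with $\sigma(\D)=\D\setminus W$, and I then take $\phi:=\phi_1\circ\sigma$. Composition with an inner function preserves $N^{+}$, and because $\sigma$ is unimodular on $\T$ while $\phi_1$ carries $\T\setminus\{1\}$ into $\R$, the function $\phi$ lies in $N^{+}_{\R}$ with range $\phi_1(\D\setminus W)=\C_{+}\setminus F$.

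The substantive case is the realization of $\C\setminus(E\cup F\cup G)$. The plan, signaled by the abstract, is to build a simply connected Riemann surface $\Sigma$ by conformal welding: take $\D$ (to parametrize the ``upper'' side $\C_{+}\setminus F$) and a copy of the exterior disk $\hat\C\setminus\overline{\D}$ (to parametrize the ``lower'' side $\C_{-}\setminus G$), then glue them along the open arcs of $\T$ corresponding to the open intervals making up $\R\setminus E$, identifying boundary points that are intended to share the same image on $\R$. The ``disk tree'' construction advertised in the abstract then allows further sheets ramified over any prescribed capacity-zero sets $F\subset\C_{+}$ and $G\subset\C_{-}$. The resulting $\Sigma$ is simply connected and hyperbolic, so by the uniformization theorem it is biholomorphic to $\D$; composing the inverse biholomorphism with the natural projection $\Sigma\to\hat\C$ yields the candidate $\phi$, and the welding guarantees real boundary values almost everywhere and image precisely $\C\setminus(E\cup F\cup G)$.

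I expect the main obstacle to be verifying that this welded uniformization lies in the Smirnov class $N^{+}$ rather than merely in the Nevanlinna class $N$, and equivalently that no unintended singular inner factor appears when $\phi$ is placed in the Helson form $i(u+v)/(u-v)$ (so that $u-v$ is outer). This is where the delicate combinatorial control of the welding via the disk tree becomes essential; it is also the engine that later drives the companion valence theorem, since the finite-valence case corresponds precisely to a finite welding scheme recorded by the associated valence tree.
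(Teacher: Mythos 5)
Your first paragraph and your treatment of the half-plane cases are fine: the structural part of the theorem is indeed just the discussion leading to \eqref{789ruwoiepdws}, and the composition $\phi_1\circ\sigma$ with a Frostman-type inner function $\sigma$ satisfying $\sigma(\D)=\D\setminus W$, $W=\phi_1^{-1}(F)$, is exactly the paper's Example 1 and works (the paper's formal proof instead uses the covering map of $\C_{+}\setminus F$ directly, but your route is legitimate). The problem is the main case $\C\setminus(E\cup F\cup G)$, where your welding plan has several genuine gaps. First, if $\R\setminus E$ has $k\geq 2$ components, welding one copy of $\D$ to one copy of $\widehat{\C}\setminus\overline{\D}$ along all $k$ of the corresponding arcs does not produce a simply connected surface: a loop crossing two distinct welded arcs is not null-homotopic (the Euler characteristic drops to $2-k$), so uniformization will not return the disk. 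The paper's disk trees avoid this precisely by welding each adjacent pair of sheets along a \emph{single} arc and arranging the sheets in a tree; but that machinery, as developed, uses finitely many sheets and the maps $z\mapsto z^{m}$, so it produces rational functions of finite valence. Second, and more fundamentally, welding creates branching over points of $\R\cup\{\infty\}$; it has no mechanism for \emph{omitting} an arbitrary relatively closed capacity-zero set $F\subset\C_{+}$ or $G\subset\C_{-}$ from the range. Your claim that the disk-tree construction ``allows further sheets ramified over any prescribed capacity-zero sets'' is not established anywhere and is not how such omitted sets can arise.

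The paper's actual argument is different and supplies exactly the ingredients you identify as missing. After normalizing $0\in E$, it passes to a fourth-root model: it forms a planar domain $\Omega\subset\{\Re z>0\}$ by deleting from the right half-plane the fourth-root images $\wt{E}_1,\wt{E}_2,\wt{E}_3$ of $E$ and the (capacity-zero) fourth-root images $\wt{F},\wt{G}$ of $F$ and $G$, takes the universal covering map $\psi:\D\to\Omega$, and sets $\phi=\psi^{4}$. The containment $\Omega\subset\{\Re z>0\}$ forces $\psi\in H^{p}$ for all $p<1$, hence $\phi\in H^{p}$ for all $p<\tfrac14$, which settles Smirnov membership by \eqref{bcvcdf} --- this is the step your proposal leaves open, and without a geometric containment of this kind (or a finite valence bound, unavailable here) there is no reason a welded uniformizer lies in $N^{+}$, or even in $N$. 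Real boundary values are then obtained in two steps: a covering-space argument shows the radial limits of $\psi$ land in $\partial\Omega$, and Lemma \ref{99w8w8w8w8w+} (a capacity argument in the spirit of Frostman--Rudin) shows they avoid the capacity-zero part $\wt{F}\cup\wt{G}$ almost everywhere, so that $\phi=\psi^{4}$ is real a.e.\ on $\T$ and has range exactly $\C\setminus(E\cup F\cup G)$. Neither of these steps is supplied by ``the welding guarantees real boundary values.'' So while your outline correctly locates where the difficulty sits, the construction you propose would not go through as stated; the welding/disk-tree technology is the right tool for the valence theorem, not for realizing arbitrary ranges.
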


Our  proof needs a variation of a result from \cite[p.~119]{Fisher}. 

\begin{Lemma}\label{99w8w8w8w8w+}
Suppose $f$ is a non-constant function belonging to $H^p$ for some $p \in (0, \infty)$ and $\mathcal{E} \subset \T$ of positive Lebesgue measure for which 
$$\lim_{r \to 1^{-}} f(r \xi) =: f(\xi)$$ exists for
each $\xi \in \mathcal{E}$ . If 
$E = \{f(\xi): \xi \in \mathcal{E}\}$, then $E$ has positive capacity. 
\end{Lemma}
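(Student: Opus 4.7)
The plan is to argue by contradiction, supposing $E$ has capacity zero and deriving that $f$ must be constant. First I reduce to the case that $E$ is bounded: since radial limits of an $H^p$ function are finite almost everywhere on $\T$, the sublevel sets $\mathcal{E}_n := \{\xi \in \mathcal{E} : |f(\xi)| \leq n\}$ exhaust $\mathcal{E}$ up to a null set, so some $\mathcal{E}_n$ has positive measure; replacing $\mathcal{E}$ by $\mathcal{E}_n$ forces $E$ to lie in a closed disk.

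Next I produce a subharmonic function on $\C$ that encodes the capacity-zero obstruction. Since $E$ is bounded and polar, Evans' theorem (combined with the fact that every polar set is contained in a $G_\delta$ polar set) supplies a positive, compactly supported Borel measure $\mu$ on $\C$ whose logarithmic potential
$$U^\mu(w) := \int \log\frac{1}{|w - \zeta|}\, d\mu(\zeta)$$
is identically $+\infty$ on $E$. Setting $u := -U^\mu$, we obtain a subharmonic function on $\C$ that is not identically $-\infty$, equals $-\infty$ at every point of $E$, and satisfies a global growth bound $u(w) \leq C_0 + \log^+|w|$ because $\mu$ has compact support and finite mass. Then $v := u \circ f$ is subharmonic on $\D$; it is not identically $-\infty$, since otherwise $f$ would map $\D$ into the planar-measure-zero set $\{u = -\infty\}$, contradicting the open-mapping theorem for the non-constant $f$. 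Combining the growth bound with $\log^+ x \leq x^p/p$ yields
$$v(z) \leq C_0 + \frac{|f(z)|^p}{p},$$
and since $f \in H^p$ the subharmonic function $|f|^p$ has a harmonic majorant given by the Poisson integral of $|f^*|^p \in L^1(\T)$; thus $v$ inherits a harmonic majorant $H$ whose radial boundary values $H^*$ are integrable and in particular finite almost everywhere.

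The conclusion comes from analyzing $\psi := v - H$, a non-positive subharmonic function on $\D$ not identically $-\infty$. By the Riesz decomposition, $-\psi \geq 0$ is a non-negative superharmonic function on $\D$ and so splits as the Poisson integral of a finite positive Borel measure on $\T$ plus a Green potential of a positive measure on $\D$. Since Green potentials have radial boundary limit zero almost everywhere, $\psi$ has radial limits that are finite almost everywhere on $\T$. On the other hand, for $\xi \in \mathcal{E}$ we have $f(r\xi) \to f(\xi) \in E$, and upper semicontinuity of $u$ at $f(\xi)$ (where $u = -\infty$) forces $v(r\xi) \to -\infty$; together with $H(r\xi) \to H^*(\xi) < \infty$ a.e., this makes $\psi(r\xi) \to -\infty$ on a positive-measure subset of $\T$, contradicting the previous sentence.

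The main obstacle is the construction in the second paragraph: producing a subharmonic $u$ that simultaneously witnesses the capacity-zero condition on the bounded (possibly non-closed) polar set $E$ and enjoys a logarithmic growth bound at infinity; this combines outer regularity of capacity with Evans' theorem. A secondary technical point is the identification of the radial boundary behavior of $\psi$, which rests on the Riesz decomposition together with the classical fact that Green potentials on $\D$ have vanishing radial limits almost everywhere on $\T$.
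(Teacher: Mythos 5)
Your proof is correct, and the second half of it takes a genuinely different route from the paper's. Both arguments share the same opening move: assume $E$ is polar, produce (via an Evans-type construction) a logarithmic potential that is $+\infty$ on $E$, and compose with $f$. After that the two diverge. The paper first uses Egorov's theorem and inner regularity to replace $\mathcal{E}$ by a compact set on which the boundary function is continuous (so that $E$ becomes compact and bounded, and the classical Evans theorem applies directly), then takes a harmonic conjugate and \emph{exponentiates}, producing $F = e^{-u-iv}$; Jensen's inequality shows $F \in H^p$, and the contradiction comes from the classical uniqueness theorem that a nonzero $H^p$ function cannot have vanishing radial limits on a set of positive measure. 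You instead stay in the subharmonic category throughout: you reduce to bounded $E$ by sublevel sets rather than Egorov, use the general fact that a bounded polar set is contained in the $-\infty$ set of a potential of a compactly supported measure (after normalizing $\mu$ to mass one so the bound $u \leq C_0 + \log^+|w|$ holds), dominate $u\circ f$ by the harmonic majorant of $C_0 + |f|^p/p$, and then invoke the Riesz decomposition of the resulting non-positive subharmonic function together with Fatou's theorem for Poisson integrals of measures and Littlewood's theorem that Green potentials have radial limit zero almost everywhere. What the paper's route buys is economy of prerequisites -- everything reduces to standard Hardy-space facts, with no need for the Riesz decomposition or Littlewood's theorem. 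What your route buys is generality: since $\log^+|f|$ already has a harmonic majorant for any $f$ in the Nevanlinna class, your argument (skipping the inequality $\log^+ x \leq x^p/p$) proves the lemma for all of $N$, not just $\bigcup_{p>0} H^p$, and it handles a possibly non-Borel $E$ without first passing to a compact subset. The only points worth making explicit in a write-up are the normalization of $\mu$ needed for the stated growth bound and the justification (Riesz decomposition on a disk containing $\overline{E}$) for extracting a compactly supported measure from a general polar set.
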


\begin{proof}
  We follow the original proof from \cite[p.~119]{Fisher} with a slight variation.
  First, note that since the almost everywhere defined boundary $\xi \mapsto f(\xi)$ function on $\T$ is a limit of
  continuous functions (the dilations $f_r(\xi) := f(r \xi)$) on $\T$,  
  by Egorov's theorem there is a set of positive measure
  that is a subset of $\mathcal{E}$ on which the boundary function
  $f$ is continuous. By the 
  inner regularity of Lebesgue measure, this set has a compact
  subset of positive measure.  Without loss of generality we may
  take $\mathcal{E}$ to be this set. Then $f(\mathcal{E})$ is compact,
  and
  $|f| \leq K$ on $\mathcal{E}$ for some $K > 0$.
    Suppose towards a contradiction that $E$ has zero logarithmic capacity. Then
    by Evan's theorem 
    \cite[p.~33]{Fisher}, for some probability measure $\sigma$ on $\T$, the logarithmic potential 
$$p(z) = - \int_{E} \log |z - \zeta| d\sigma(\zeta)$$
satisfies 
$$\lim_{z \to \xi} p(z) = +\infty, \quad \xi \in E.$$
The function $u = p(f)$ is harmonic on $\D$ and satisfies the condition 
$$\lim_{r \to 1^{-}} u(r \xi) = + \infty, \quad \xi \in \mathcal{E}.$$
Let $v$ be the harmonic conjugate of $u$ on $\D$ and define 
$F = e^{-u - i v}$.
Assuming that $F \in H^p$ (a fact we will prove momentarily), we see that 
$$\lim_{r \to 1^{-}} |F(r \xi)| = 0, \quad \xi \in \mathcal{E}.$$
But since $\mathcal{E}$ has positive Lebesgue measure, this would mean that $f \equiv 0$ \cite[p.~17]{Duren}, a contradiction. 

To finish,  we  now show that $F \in H^p$. Observe that a use of Jensen's inequality \cite[p.~43]{MR1334766}  and the integral means definition of $H^p$ from \eqref{Mp}
shows that 
\begin{align*}
\int_{0}^{2 \pi} |F(r e^{i t})|^p \frac{d t}{2 \pi} & = \int_{0}^{2 \pi} \exp\left(\int_{E} \log|f(r e^{i t}) - \zeta|^p d \sigma(\zeta)\right)\frac{d t}{2 \pi}\\
& \leqslant \int_{0}^{2 \pi} \left(\int_{E} |f(r e^{i t}) - \zeta|^p d \sigma(\zeta)\right) \frac{d t}{2 \pi}\\
& = \int_{E}\left( \int_{0}^{2 \pi}  |f(r e^{i t}) - \zeta|^p \frac{dt}{2 \pi}\right) d \sigma(\zeta)\\
& \leqslant 2^p (\sup_{0 < r < 1} M(r, f)^{p} + K^p).
\end{align*}
Since the last quantity above is independent of $r \in (0, 1)$, this shows that $F \in H^p$ and thus completes the proof. 
\end{proof}

\begin{proof}[Proof of Theorem \ref{MT}]

  The first part of the theorem follows from the discussion preceding \eqref{789ruwoiepdws}. For the second part (obtaining all possible types of ranges), we proceed as follows.
  First consider the case where the range contains points in both the
  upper and lower half planes. 
Since $E$ is a proper closed subset of $\R$, it follows that  $\R \setminus E$ has at least one component. If it has {\em exactly one} component, then $E$ must be one of the following four options: 
\begin{equation}\label{s8dufds}
\varnothing, \quad (-\infty, c], \quad [c, \infty), \quad (-\infty, a] \cup [b, \infty) , \quad (a < b).
\end{equation}

We will first deal with the case where $\R \setminus E$
has {\em at least two} components.  
By means of a real translation of our function at the end, we can assume $0 \in E$ and, for some $a <0$ and $0 < b < c$, that 
\begin{equation}\label{3094ouitrgjlfksd}
E \subset (- \infty, a] \cup [0, b] \cup [c, \infty).
\end{equation}
Define 
$$E_{+} = E \cap [0, \infty), \; \; E_{-} = E \cap (-\infty, 0)$$ and the open set  
\begin{equation}\label{Omega}
\Omega = \{\Re z > 0\} \setminus (\wt{E}_1 \cup \wt{E}_2 \cup \wt{E}_3 \cup \wt{F} \cup \wt{G}),
\end{equation}
where 
$$\wt{E}_1 = \{x^{\frac{1}{4}}: x \in E_{+}\},$$
$$\wt{E}_2 =  e^{i \frac{\pi}{4}} \{(-x)^{\frac{1}{4}}: x \in E_{-}\},$$
$$\wt{E}_3 = e^{-i \frac{\pi}{4}} \{(-x)^{\frac{1}{4}}: x \in E_{-}\},$$
and $\wt{F}$ and $\wt{G}$ are the intersections of the right half plane with the
 images of $F$ and $G$ respectively under the multivalued map 
$z \mapsto z^{1/4}$.  
\begin{figure}
\begin{tikzpicture}[scale=2.0]
\draw (-2,0) node{};
\draw[thick] (0,0)--(1,0) node[anchor=west]{$b^{1/4}$};
\draw (.5,0) node[anchor=south]{$\widetilde{E}_1$};
\draw[thick,->] (2,0) node[anchor=east]{$c^{1/4}$} -- (4,0) 
    node[anchor=west]{$\widetilde{E}_1$};
\draw[thick,->] (1,1) node[anchor=north west]{$|a|^{1/4}e^{\pi i/4}$} -- (2,2) node[anchor = west]{$\widetilde{E}_2$};
\draw[thick,->] (1,-1) node[anchor= south west]{$|a|^{1/4}e^{-\pi i/4}$} -- (2,-2)node[anchor = west]{$\widetilde{E}_3$};
\draw[thick,dashed] (1,.25)--(1.25,.3125) node[above]{$\widetilde{F}$};
\draw[thick,dashed] (.25,-1)--(.3125,-1.25)node[right]{$\widetilde{F}$};
\draw[thick] (0,2) -- (0,-2);
\draw[dashed] (.5,-.25) --( .75,-.25)node[below]{$\widetilde{G}$};
\draw[dashed] (.25,.5) --( .25,.75)node[above]{$\widetilde{G}$};

\draw[fill=black] (1,0) circle(.05cm);
\draw[fill=black] (2,0) circle(.05cm);
\draw[fill=black] (1,1) circle(.05cm);
\draw[fill=black] (1,-1) circle(.05cm);

\draw (2.5,1.5) node{\LARGE $\Omega$};

\end{tikzpicture}
\caption{The region $\Omega$ when $E = (- \infty, a] \cup [0, b] \cup [c, \infty).$}\label{figa}
\end{figure}
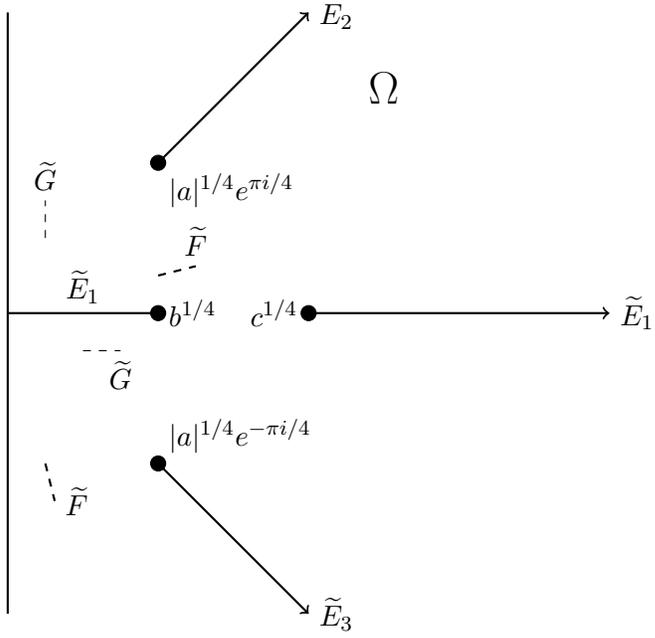
See Figure \ref{figa} for an illustration of $\Omega$ when 
$$E = (- \infty, a] \cup [0, b] \cup [c, \infty).$$
Since $\Omega$ is contained in 
$$\{\Re z > 0\} \setminus \left([0, b^{\frac{1}{4}}] \cup [c^{\frac{1}{4}}, \infty) \cup e^{i \pi/4} [|a|^{\frac{1}{4}}, \infty) \cup e^{- i\pi/4} [|a|^{\frac{1}{4}}, \infty) \right)$$ 
and this last set is connected, one concludes, also using the containment in \eqref{3094ouitrgjlfksd} along with the fact that extracting a set of capacity zero does not disconnect a domain \cite[p.~68]{MR1334766},  that $\Omega$ is connected. 
By \cite[p.~125]{MR1344449} there exists an analytic covering map for $\Omega$, such that $\psi(\D) = \Omega$. Furthermore, since $\psi(\D)$ is contained in a half-plane, then $\psi$ belongs to $H^{p}$ for all $p \in (0, 1)$ \cite[p.~109]{Garnett}. Thus, by \eqref{bcvcdf}, $\psi \in N^{+}$. 

The map $\psi$ is a covering map from $\D$ to $\Omega$, which 
means that each point of $\Omega$  is contained in an open neighborhood $U$ such that 
$\psi^{-1}(U)$ consists of disjoint open sets each of which is homeomorphic to $U$ 
under $\psi$. 

We now claim that if the radial limit
\begin{equation}\label{oos9d9}
\lim_{r \to 1^{-}} \psi(r e^{i t})
\end{equation}
 exists, which it will for almost every $t$ (see \eqref{radiallimit}), then this value must belong to $\partial \Omega$. 
Indeed, if this were not the case, then, for some particular $t$,  the limit would be equal to some 
$w \in \Omega$.  Now choose an open neighborhood $U$ of $w$ such that 
$\psi^{-1}(U)$ consists of disjoint sets each of which is homeomorphic to $U$ 
under $\psi$.  Let $W$ be an open neighborhood of $w$ contained in 
$U$ and such that 
$\overline{W} \cap U$ is compact.  
Then 
$$\psi^{-1}(W) = \bigcup_{a \in A} V_a,$$ where 
$A$ is some index set and the $V_a$ are pairwise disjoint open sets that are each
homeomorphic to $W$ under $\psi$.  
Thus each $V_a$ has compact closure in $\psi^{-1}(U)$.  
For some $b \in [0,1)$, the curve 
$$r \to \psi([r e^{i t},e^{i t})), \quad r \in [b, 1),$$
must lie entirely in $W$ since $\psi$ has radial limit of $w$ 
at $e^{i t}$. But since the $V_a$ are 
disjoint open sets, this means that the ray 
$$[r e^{i t},e^{i t}), \quad r \in [b, 1),$$
must lie entirely in one of the $V_a$.  But this is impossible because 
each of the $V_a$ has compact closure in $\psi^{-1}(U)$ but 
$e^{i t} \not \in \psi^{-1}(U)$.

The set $\wt{F}$ is the union of two images of $F$ under maps that are
Lipschitz in any annulus centered at the origin.  Since $F$ has
capacity $0$, each of the images has capacity $0$
\cite[p.~137]{MR1334766}, and their union $\wt{F}$ also has capacity
zero \cite[p.~57]{MR1334766}. The same applies to $\wt{G}$, and thus
$\wt{F} \cup \wt{G}$ has capacity zero.
By Lemma \ref{99w8w8w8w8w+}, the values in those sets cannot be
boundary values of $\psi$, except possibly on a set of measure $0$.

Setting $\phi = \psi^{4}$ we see that $\psi \in H^{p}$ 
for all $p \in (0, \tfrac{1}{4})$ and thus, again from \eqref{bcvcdf}, $\psi \in N^{+}$. Moreover, since 
\[
\lim_{r \to 1^{-}} \psi(r e^{i t}) \in \partial \Omega \setminus (\wt{F} \cup \wt{G})
\]
for almost every $\theta$, we see that 
$$\lim_{r \to 1^{-}} \phi(r e^{i t}) \in \R$$ for almost every $t$. Thus $\phi \in N_{\R}^{+}$. 
The construction of $\Omega$ from \eqref{Omega}, and the fact that $0 \in E$, will show that 
\[\phi(\D) = \{z^4: z \in \Omega\} = \C \setminus (E \cup F \cup G).\]

For the cases where the desired range is $\C_{+} \setminus F$ or $\C_{-} \setminus G$, and for the case where 
 $E = (-\infty, c]$ or $E = [c, \infty)$ or $E = (-\infty, a] \cup [b, \infty)$ and the desired range is 
$\C \setminus (E \cup F \cup G)$, we let $\phi$ be the covering map from $\D$ onto the desired range.  The proof that this map has the required properties is similar to the proof of the 
first case above.  We use the fact that any mapping from $\D$ into a domain with at least one slit that 
goes to $\infty$ is in $H^p$ for all $p \in (0,\tfrac{1}{2})$ \cite[p.~110]{Garnett}.

If the desired range is $\C \setminus (F \cup G)$, we let $\wt{F}$ and $\wt{G}$ be the 
images of $F$ and $G$ respectively under the multivalued map $z \mapsto z^{1/2}$ -- which are of capacity zero \cite[p.~137]{MR1334766}.  Let 
$\psi$ be the covering map from $\D$ onto 
$\C \setminus (\wt{F} \cup \wt{G} \cup [1,\infty))$, and let $\phi = \psi^2$.  The proof that mapping 
has the required properties is similar to the proofs above.
\end{proof}

\begin{Remark}
\begin{enumerate}
\item The mapping $\phi$ constructed above is actually outer, as long as $0$ is not in the range. 
To see this, observe that, in the first part of the proof, 
$\psi(\D) \subset \{\Re z > 0\}$. 
Such  functions are outer \cite[p.~109]{Garnett}. Since the product of outer functions is another outer function, 
this means that $\phi = \psi^4$ is outer. 
In the other cases in which $0$ is not in the range, we have that $\phi$ is a map onto a domain with a linear slit 
containing $0$ and going to $\infty$.   
But any map onto such a domain is outer, since we can define the square root of such a mapping, 
and that mapping will be onto a half plane omitting $0$ and thus outer (and belong to $H^p$ for all $p \in (0, 1)$ \cite[p.~109]{Garnett}).   
\item The proof of Theorem \ref{MT} also
shows that we can find a 
$\phi$ with the desired properties that is in $H^p$ for each $p \in (0,\tfrac{1}{4})$. 
\end{enumerate}
\end{Remark}

\section{Controlling the valence}

A key step in the construction in Theorem \ref{MT} was the uniformization theorem \cite[p.~125]{MR1344449}. However, it is not clear from our construction how one can control the valence of $\phi$. 

In this regard, one can ask the following question: Suppose we are given an closed set $E \subsetneq \R$ and a pair $(m, n)$, $m, n \in \N_0 \cup \{\infty\}$. Can we find a $\phi \in N^{+}_{\R}$ such that the valence of $\phi$ is equal to $m$ on $\C_{+}$,  $n$ on $\C_{-}$, and such that $\phi(\D) = \C \setminus E$? Can we say anything about the valence of $\phi$ on $\R \setminus E$?

Let us start with a few observations. Extending a standard proof of the open mapping theorem for analytic functions, one can prove the following. Recall that $v_{\phi}$ is the valence function from \eqref{bbbbbbx} and $d_{\phi}$ is the deficiency index from \eqref{668wyeuhfjw}. 

\begin{Proposition}
For $\phi \in N_{\R}^{+}$ and $N = 1, 2, \ldots$, the set 
$$\{w \in \C: v_{\phi}(w) \geq N\}$$ is an open subset of $\C$. 
\end{Proposition}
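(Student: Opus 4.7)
\emph{Proof plan.} The strategy is to mimic the classical proof of the open mapping theorem for analytic functions via Rouch\'e's theorem, generalized from a single preimage to several. Assume $\phi$ is non-constant (otherwise the set in question is either empty or a singleton, so the statement implicitly requires non-constancy). Fix $w_0$ with $v_\phi(w_0) \geq N$ and pick $N$ distinct preimages $z_1, \ldots, z_N$ in $\phi^{-1}(\{w_0\}) \cap \D$; when $v_\phi(w_0) = \infty$ this is still possible, since we only need $N$ of them.

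First I would use the fact that the zeros of $\phi - w_0$ are isolated in $\D$ (here non-constancy of $\phi$ is essential) to choose pairwise disjoint closed disks $\overline{B(z_j, r_j)} \subset \D$ on whose boundary circles $\phi$ does not attain the value $w_0$. Setting
\[
\delta := \min_{1 \leq j \leq N} \; \min_{|z - z_j| = r_j} |\phi(z) - w_0| > 0,
\]
Rouch\'e's theorem applied to the decomposition $\phi - w = (\phi - w_0) + (w_0 - w)$ on each circle $\partial B(z_j, r_j)$ shows that, for every $w$ with $|w - w_0| < \delta$, the function $\phi - w$ has the same number of zeros in $B(z_j, r_j)$ as $\phi - w_0$, which is at least one. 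Equivalently, one may invoke the local open mapping theorem to produce, inside each $B(z_j, r_j)$, an open neighborhood of $w_0$ whose points all lie in $\phi(B(z_j, r_j))$, and then intersect these $N$ neighborhoods.

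Since the disks $B(z_j, r_j)$ are pairwise disjoint, combining these $N$ zeros gives at least $N$ distinct points of $\phi^{-1}(\{w\})$ in $\D$, hence $v_\phi(w) \geq N$. Therefore the open disk $B(w_0, \delta)$ lies inside $\{w \in \C : v_\phi(w) \geq N\}$, which is consequently open. There is no serious obstacle: the argument is essentially Rouch\'e's theorem applied locally at each of $N$ chosen preimages, with the only point of care being the selection of disjoint, zero-free-boundary disks, and the harmless handling of the case $v_\phi(w_0) = \infty$ by passing to any $N$-element subset of the preimages.
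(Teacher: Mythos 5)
Your proof is correct and is exactly the argument the paper has in mind: the paper gives no details, saying only that the result follows by ``extending a standard proof of the open mapping theorem,'' and your Rouch\'e argument at $N$ disjoint preimage disks is precisely that extension (your observation that non-constancy must be implicitly assumed is also right, since for constant $\phi$ the set is a non-open singleton).
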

Note that the above result does not hold when $N = \infty$. 

\begin{Example}
Theorem \ref{MT} says that we can find a $\phi \in N_{\R}^{+}$ such that 
$$\phi(\D) = \C \setminus [0, \infty).$$ However, the previous proposition says that we can't find a $\phi \in N_{\R}^{+}$ with the same range and that also satisfies 
$$v_{\phi}|_{\C_{+}}  = 1, \quad v_{\phi}|_{\C_{-}} = 2, \quad v_{\phi}|_{\{x < 0\}} = 3$$
since $v_{\phi}|_{\C_{+}}$ and $v_{\phi}|_{\C{-}}$ must be at least $3$.
\end{Example}

Next we explore when the valence is finite. 

\begin{Proposition}\label{oriuioewur}
For $\phi \in N_{\R}^{+}$ the following are equivalent. 
\begin{enumerate}
\item[(i)] $\phi$ is a rational function; 
\item[(ii)] $v_{\phi}|_{\C_{+}}$ and $v_{\phi}|_{\C_{-}}$ are finite; 
\item[(iii)] There are two relatively prime finite Blaschke products $B_1, B_2$ such that $B_1 - B_2$ has no zeros on $\D$ and such that 
\begin{equation}\label{pppsd9sd97fd8}
\phi = i \frac{B_1 + B_2}{B_1 - B_2}.
\end{equation}
\item[(iv)] $d_{\phi}(i)$ and $d_{\phi}(-i)$ are finite. 
\end{enumerate}
Furthermore, 
\begin{enumerate}
\item[(a)] if any of the above equivalent conditions hold, we have 
$$v_{\phi}|_{\C_{+}} = \operatorname{deg}(B_2), \quad v_{\phi}|_{\C_{-}} = \operatorname{deg}(B_1).$$
\item[(b)] if any of the above conditions do not hold then either $v_{\phi}|_{\C_{+}}$ or $v_{\phi}|_{\C_{-}}$ is infinite nearly everywhere. 
\end{enumerate}
\end{Proposition}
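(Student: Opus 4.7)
The strategy is to establish the equivalences by looping through them, relying heavily on the finite-deficiency criterion (i)$\Leftrightarrow$(iv) already recorded in the introduction (``both deficiency numbers are finite if and only if $\phi \in N^{+}_{\R}$ is a rational function''). The implication (i)$\Rightarrow$(ii) is immediate, since a non-constant rational function has globally bounded valence. The main work lies in establishing (i)$\Leftrightarrow$(iii), the formulas in (a), and part (b), from which (ii)$\Rightarrow$(i) will follow by contraposition, closing the chain.

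For (i)$\Leftrightarrow$(iii), I would begin with Helson's representation $\phi = i(u+v)/(u-v)$, where $u,v$ are inner and $u-v$ is outer. Solving gives $u/v = (\phi+i)/(\phi-i)$, and under (i) this quotient is a rational function with unimodular boundary values on $\T$. Any such function is a quotient of two coprime finite Blaschke products (a standard consequence of extracting Blaschke factors at the zeros and poles inside $\D$ and invoking the maximum principle to pin down the leftover unimodular constant), so $u/v = B_1/B_2$, and substitution yields $\phi = i(B_1+B_2)/(B_1-B_2)$. Analyticity of $\phi$ on $\D$ forces $B_1-B_2$ to be non-vanishing there: coprimality rules out a common zero, while a coincidence $B_1(w)=B_2(w)\neq 0$ would produce a pole of $\phi$. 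The converse (iii)$\Rightarrow$(i) is obvious.

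For part (a), starting from $\phi = i(B_1+B_2)/(B_1-B_2)$ I would compute
\[
\phi - \lambda \;=\; \frac{(i-\lambda)\bigl(B_1 - \mu_\lambda B_2\bigr)}{B_1-B_2}, \qquad \mu_\lambda := \frac{\lambda+i}{\lambda-i}.
\]
Since $B_1-B_2$ is outer (a rational $H^\infty$-function with no zeros in $\D$), the valence $v_\phi(\lambda)$ equals the number of zeros of $B_1-\mu_\lambda B_2$ in $\D$, which the argument principle identifies with the winding of $(B_1-\mu_\lambda B_2)(\T)$ about the origin. Factoring $B_1-\mu_\lambda B_2 = B_2(H-\mu_\lambda)$ with $H := B_1/B_2$, whose restriction to $\T$ is a covering of degree $\deg B_1 - \deg B_2$, this winding splits as $\deg B_2$ plus the winding of $H(\T)$ about $\mu_\lambda$. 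Since $|\mu_\lambda|>1$ for $\lambda\in\C_+$ (giving winding $0$) and $|\mu_\lambda|<1$ for $\lambda\in\C_-$ (giving winding $\deg B_1 - \deg B_2$), one obtains $v_\phi|_{\C_+} = \deg B_2$ and $v_\phi|_{\C_-} = \deg B_1$.

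The main remaining step, which I expect to be the central obstacle, is proving part (b); once established, (ii)$\Rightarrow$(i) follows by contraposition. Suppose the equivalent conditions fail. By (i)$\Leftrightarrow$(iv), one of $d_\phi(i), d_\phi(-i)$ is infinite; without loss of generality $d_\phi(i)=\infty$, and by constancy of $d_\phi$ on each half-plane $d_\phi\equiv\infty$ on $\C_+$. Now invoke Rudin's theorem recalled in the introduction: for all $\lambda$ outside a set of logarithmic capacity zero, the inner factor $\Theta_\lambda$ of $\phi-\lambda$ is a pure Blaschke product, carrying no singular inner factor. For such $\lambda\in\C_+$ we then have $d_\phi(\lambda) = \deg\Theta_\lambda = v_\phi(\lambda)$, forcing $v_\phi(\lambda) = \infty$. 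Hence $v_\phi|_{\C_+}$ is infinite off a capacity-zero set, establishing (b). Contrapositive: if $v_\phi|_{\C_+}$ and $v_\phi|_{\C_-}$ are finite at every point, neither is infinite on a set of positive capacity, so the equivalent conditions hold, giving (ii)$\Rightarrow$(i).
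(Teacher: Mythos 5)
Your proposal is correct and follows essentially the same route as the paper: the Cayley transform $(\phi\pm i)$ reduction to coprime finite Blaschke products for (i)$\Leftrightarrow$(iii), Rudin's generalization of Frostman's theorem for (b) and hence (ii)$\Rightarrow$(i), and the citation of Helson for (i)$\Leftrightarrow$(iv). The only cosmetic difference is that you count zeros of $B_1-\mu_\lambda B_2$ via winding numbers where the paper applies Rouch\'e's theorem to $B_2-\eta B_1$ with $|\eta|<1$ --- the same computation in different clothing.
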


\begin{proof}
$(i) \iff (iv)$ is from \cite{Helson}. 

$(iii) \implies (i)$: One can show directly that a $\phi$ given by \eqref{pppsd9sd97fd8} is a rational function in $N_{\R}^{+}$. 

$(i) \implies (iii)$: Set $g  = (\phi - i)/(\phi + i)$ and observe that $g$ is meromorphic function on $\D$ that is continuous with unimodular boundary values on $\T$. A classical theorem of Fatou \cite{GRM} says that 
$$g = \frac{B_1}{B_2},$$ where $B_1$ and $B_2$ are relatively prime Blaschke products. The result now follows with a simple computation. 

$(iii) \implies (ii)$: We follow an argument from \cite{GRM}. The M\"{o}bius transformation
$$\psi(z) = \frac{z - i}{z + i}$$ is injective and maps $\C_{+}$ onto $\D$ and $\C_{-}$ onto $\widehat{\C} \setminus \D^{-}$. If $w \in \C_{+}$, the number of solutions to $\phi(z) = w$ is the same as the number of solutions to $\psi \circ \phi (z)= \psi(w) = \eta \in \D$. Writing this out, this is same as the number of solutions to 
$$\eta = \frac{\phi(z) - i}{\phi(z) + i} = \frac{B_2(z)}{B_1(z)}.$$
To examine the number of zeros in $\D$ of $B_2 - \eta B_1$, observe that on $\T$ we have 
$$|\eta B_1| = |\eta| < 1 = |B_2|$$ and so, by Rouche's Theorem, the number of zeros in $\D$ of $B_2$ and $B_2 - \eta B_1$ are the same. This proves that $v_{\phi}(w) = \operatorname{deg}(B_2)$ whenever $w \in \C_{+}$.  The corresponding valence on $\C_{-}$ follows in a similar way. This also verifies $(a)$. 

$(ii) \implies (iv)$: Suppose $v_{\phi}|_{\C_{+}}$ is finite but $d_{\phi}(i) = \infty$. Since $d_{\phi}|_{\C_{+}}$ is constant, we see that $d_{\phi}(\lambda) = \infty$ for all $\lambda \in \C_{+}$. By \cite{MR0235151}, the inner factor $\Theta_{\lambda}$ of $\phi  - \lambda$ is a Blaschke product for all $\lambda \not \in \R$ except possibly for a set of capacity zero. As discussed in \eqref{bbbbbbx},
$$v_{\phi}(\lambda) = \operatorname{deg}(\Theta_{\lambda}) = \infty$$ for nearly all $\lambda \in \C_{+}$, a contradiction. An analogous argument holds for $v_{\phi}|_{\C_{-}}$. This also proves $(b)$. 
\end{proof}

The techniques in the above proof also show the following. 

\begin{Corollary}
If $\phi \in N_{\R}^{+}$ and $v_{\phi}|_{\C_{+}}$ is non-constant, then $v_{\phi}(\lambda) = \infty$ for nearly all $\lambda \in \C_{+}$. An analogous result holds for $v_{\phi}|_{\C_{-}}$
\end{Corollary}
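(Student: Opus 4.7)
The plan is to prove the contrapositive in the quantitative form given: assume the set $S := \{\lambda \in \C_{+} : v_{\phi}(\lambda) < \infty\}$ has positive capacity, and derive that $v_{\phi}|_{\C_{+}}$ must be constant. The tools are already in place from the proof of Proposition \ref{oriuioewur}, in particular Rudin's theorem \cite{MR0235151} and the constancy of $d_{\phi}$ on each open half-plane.

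First I would recall that, by Rudin's theorem cited earlier, the inner factor $\Theta_{\lambda}$ of $\phi - \lambda$ is a Blaschke product for every $\lambda \in \C \setminus \R$ outside an exceptional set of capacity zero. Since $S$ has positive capacity while the exceptional set has capacity zero, we can choose some $\lambda_{0} \in S$ for which $\Theta_{\lambda_0}$ is a Blaschke product. As recorded after \eqref{bbbbbbx}, the degree of the Blaschke factor of $\Theta_{\lambda_0}$ equals $v_{\phi}(\lambda_0)$, which by the choice $\lambda_0 \in S$ is finite. Hence $\Theta_{\lambda_0}$ is a \emph{finite} Blaschke product.

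Next I would translate this finiteness into finiteness of the deficiency index. Because $\Theta_{\lambda_0}$ is a finite Blaschke product, the model space $(\Theta_{\lambda_0} H^2)^{\perp} = \operatorname{Rng}(T_{\phi} - \lambda_0 I)^{\perp}$ has finite dimension, so $d_{\phi}(\lambda_0) < \infty$. Since $d_{\phi}$ is constant on $\C_{+}$, this forces $d_{\phi}(i) < \infty$. Invoking the implication $(\text{iv}) \Rightarrow (\text{i}) \Rightarrow (\text{iii})$ of Proposition \ref{oriuioewur}, the function $\phi$ is rational of the form $i(B_1+B_2)/(B_1-B_2)$, and then part (a) of the same proposition gives $v_{\phi}|_{\C_{+}} \equiv \deg(B_2)$, i.e.\ constant. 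This contradicts the hypothesis that $v_{\phi}|_{\C_{+}}$ is non-constant, and therefore $S$ must have capacity zero, which is the desired conclusion that $v_{\phi}(\lambda) = \infty$ for nearly all $\lambda \in \C_{+}$.

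The analogous statement for $\C_{-}$ is obtained by the same argument with $i$ replaced by $-i$ and $d_{\phi}(i)$ replaced by $d_{\phi}(-i)$. There is no serious obstacle here; the only point that requires care is the selection of $\lambda_0$ inside $S$ but outside Rudin's exceptional set, which is possible precisely because the former has positive capacity and the latter has capacity zero, so their set-theoretic difference is nonempty.
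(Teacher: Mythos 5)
Your overall strategy---pass to the contrapositive, use Rudin's theorem to find $\lambda_0 \in S$ with $\Theta_{\lambda_0}$ a finite Blaschke product, conclude $d_{\phi}(\lambda_0) < \infty$, and then deduce constancy of the valence---is essentially the paper's argument run in reverse, and most of it is sound. The one genuinely flawed step is the appeal to the implication $(\text{iv}) \Rightarrow (\text{i})$ of Proposition \ref{oriuioewur}: condition (iv) requires \emph{both} $d_{\phi}(i)$ and $d_{\phi}(-i)$ to be finite, whereas you have only established $d_{\phi}(i) < \infty$, and the constancy of the deficiency index on $\C_{+}$ says nothing about $\C_{-}$. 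As the paper notes, every pair $(m,n)$ with $m, n \in \N_{0} \cup \{\infty\}$ is realized as $(d_{\phi}(i), d_{\phi}(-i))$ for some $\phi \in N^{+}_{\R}$, so there are non-rational $\phi$ with, say, $d_{\phi}(i) = 2$ and $d_{\phi}(-i) = \infty$; for such a $\phi$ your argument as written would wrongly conclude rationality. The detour through rationality and the representation $i(B_1+B_2)/(B_1-B_2)$ therefore cannot be justified from what you have.

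The gap is easy to close, and closing it shortens the proof: once $d_{\phi}(\lambda) = d_{\phi}(i) =: n < \infty$ for every $\lambda \in \C_{+}$, the model space $(\Theta_{\lambda}H^2)^{\perp}$ has dimension $n$ for every such $\lambda$, so each $\Theta_{\lambda}$ is a finite Blaschke product of degree $n$ and, by the discussion around \eqref{bbbbbbx}, $v_{\phi}(\lambda) = n$ for all $\lambda \in \C_{+}$. This yields the constancy of $v_{\phi}|_{\C_{+}}$ directly, with no mention of $\C_{-}$ or of rationality. With that replacement your proof agrees in substance with the paper's intended one, which runs in the forward direction: non-constancy of $v_{\phi}|_{\C_{+}}$ forces $d_{\phi}(i) = \infty$, hence $d_{\phi} \equiv \infty$ on $\C_{+}$, and Rudin's theorem then makes $\Theta_{\lambda}$ an infinite Blaschke product---so $v_{\phi}(\lambda) = \infty$---for nearly all $\lambda \in \C_{+}$.
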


\begin{Remark}\label{ooookkk}
We point out a paper \cite{MR554397} which gives some information about how one can, under mild technical conditions, define an inner function whose valence can be prescribed on various closed subsets of the $\D$ of zero capacity. Since real Smirnov functions take the form $i (u + v)/(u - v)$ (recall Helson's characterization from \eqref{sdhfjsd;gfgee2}), where $u$ and $v$ are inner, this creates examples of real Smirnov functions with wild valence behavior.
\end{Remark} 

We now characterize the possible valences of real Smirnov functions of finite valence. As seen in Proposition \ref{oriuioewur}, these are the rational real Smirnov functions. 

A {\itshape disk tree}
is a type of Riemann surface made by welding together copies 
of $\D$ and 
$$\D^* := \{z \in \widehat{\C}: |z| > 1 \text{ or } z = \infty\}.$$
To each copy of $\D$ and $\D^*$ in the disk tree is associated a positive 
integer $m$ which we call the valence.
A {\itshape admissible} 
arc on a disk tree is an arc on the boundary of a copy of 
$\D$ or $\D^*$ of the form  
$\{e^{i\theta} : a < \theta < b\}$ where $(a,b)$ contains no multiple of 
$2\pi / m$.  The {\itshape image arc} for a given admissible arc 
is the arc
$\{e^{i\theta} : am < \theta < bm\}.$  This is the image of the admissible arc 
under the function $z^m$.  
An admissible arc is called a {\itshape free arc} if it is part of the 
boundary of the disk tree, in other words, if it has 
{\itshape not}
been welded to 
another arc in the disk tree.  An admissible arc is a 
{\itshape welded arc} if it has been welded to another arc in the 
disk tree.

We will now formally define disk trees inductively.
A copy of $\D$ or $\D^*$ with a valence is a disk tree.
Let $X$ be a disk tree.  Let $Z$ be a new copy of $\D$ or $\D^*$ with valence 
$m$.  Let $Y$ be a copy of $\D$ or $\D^*$ in $X$ with valence $n$, where
$Y$ is a copy of $\D$ if $Z$ is a copy of $\D^*$, and vice versa. 
Suppose we are given a free arc on $Y$ and a free arc on $Z$  
and that both of them have the same image arc.  
We may weld $Y$ and $Z$ together on their 
free arcs by the map $Y \ni z \mapsto z^{n/m} \in Z$. 
See \cite[II.3C]{Ahlfors-Sario} for more on welding Riemann surfaces.
We will also give an explicit example of welding in 
Example \ref{ex:twodisctree}. 
We say the resulting surface of $X$ welded to $Z$ 
is a disk tree if it still has a free arc remaining. 

Any disk tree is simply connected (van Kampen's theorem \cite[Ch.~1]{MR1867354}).  It is also conformally equivalent to the 
disk.  Indeed, by the uniformization theorem, it is equivalent to the
disk, the plane, or the Riemann sphere.
But we may weld another copy of the disk (or complement of the disk)
to any disk tree and still 
obtain a simply connected Riemann surface, since any disk tree has 
a free arc.  Thus we can obtain the original 
disk tree from the new Riemann surface by removing a set with 
infinitely many points.  But if we take a set with infinitely many points 
away from either the Riemann sphere, the plane, or the disk, and we 
are left with a simply connected set, that set must be conformally 
equivalent to the disk. 

For a disk tree $X$, define 
$$f_X: X \mapsto \widehat{\C} \setminus \{1\}$$ by 
$f_X(z) = z^m$ for $z$ in a copy of $\D$ or $\D^*$ with valence $m$, or for 
$z$ in a welded arc that belongs to a copy of $\D$ with valence $m$. 
The function $f_X$ is clearly meromorphic in each copy of $\D$ or 
$\D^*$.  By construction, it is continuous in a neighborhood of each welded 
arc. To see this, suppose that some admissible arc $I$ in a copy of 
$\D$ or $\D^*$ with valence $n$ (call the copy $Y$) 
is welded to an admissible arc $J$ in a copy 
of $\D$ or $\D^*$ of valence $m$ (call the copy $Z$).  Let 
$Y'$ and $Z'$ be sufficiently small neighborhoods of $I$ and $J$ in 
$Y \cup I$ and $Z \cup J$, respectively.
Note that by definition of the welding the map 
\[
\phi(z) = 
\begin{cases} 
z^{n/m} &\text{ if $z \in Y'$}\\
z  &\text{ if $z \in Z'$}\\
\end{cases}
\]
is well defined, continuous and even conformal.  
But $f_X(z) = \phi(z)^m$ in $Y' \cup Z'$.  
Thus by Morera's theorem, $f_X$ is analytic in a neighborhood of each 
welded arc and thus meromorphic on $X$.
When restricted to a copy of 
$\D$ or $\D^*$ that has valence $m$ 
the function $f_X$ has valence $m$
at each point of
$\D$ or $\D^*$ respectively, and valence $0$ elsewhere.  
Also, when restricted to a welded
arc, the function $f_X$ has valence $1$ on the image arc
and $0$ elsewhere.
Thus, $f_X$ has valence on $\D$ equal to the sum of the valences of
the copies of $\D$ in the disk tree, and similarly for $\D^*$.
Its valence on a point in $\partial \D \setminus \{1\}$ is
equal to the number of image arcs in which it appears,
where if two arcs are welded together in the disk tree we
count their image arc (which is the same for both the welded arcs)
as appearing only once. 
Let 
$\phi : \D \mapsto X$ be a conformal map from $\D$ to $X$.  Then 
$f_X \circ \phi$ is a (meromorphic) 
map from $\D$ to $\widehat{\C} \setminus \{1\}$. 

\begin{Example}\label{ex:twodisctree}
  We give an explicit construction of a disk tree.
  This disk tree will have valence $1$ on $\D$ and
  $2$ on $\D^*$.  
See Figure \ref{figchart} for an illustration of some aspects of this 
example. 
Let 
$X$ consist of one copy each of $\D$ and $\D^*$, together with 
the boundary of $\partial \D \setminus \{1\}$.
For $0 < \theta < 2 \pi$, 
identify the point 
$e^{i \theta}$ on $\partial \D \setminus \{1\}$ with 
the point $e^{i \theta / 2}$ on $\partial \D^* \setminus \{1\}$. 
We will weld along these 
identified boundary points.  To do this explicitly, 
let \[ \overline{\D} \supset U_1 = \{re^{i\theta} : 1/2 < r \leq 1 \text{ and } 
0 < \theta < 2\pi\}\]
and
\[ \overline{\D^*} \supset U_2 = \{re^{i\theta} : 1 \leq r < 2 \text{ and } 
0 < \theta < \pi\}\]
and let $U = U_1 \cup U_2$.   
Take coordinate charts $\phi_1: \D \rightarrow \C$ and 
$\phi_2: \D^* \rightarrow \C$ and 
$\phi_3: U \rightarrow \C$, where 
$\phi_1(z) = z$ and $\phi_2(z) = 1/z$ and 
\[
\phi_3(z) = 
\begin{cases}
z^{1/2} &\text{if $z \in U_1,$}\\
z & \text{if $z \in U_2$}
\end{cases}
\]
where we take the branch of $z^{1/2}$ with $(-1)^{1/2} = i$ and branch 
cut along the positive real axis.
We can take as a basis for the open sets in $X$ sets that are open in 
$\D$ or in $\D^*$ or sets that are the inverse images of 
open sets under $\phi_3$.  Thus $U$ is an open set. 
(Note that any set that is the inverse image of an open set under 
$\phi_3$ and lies entirely in $\D$ is open in $\D$; the same may be 
said for $\D^*$.)
Notice that $\phi_3$ is continuous, and that
$\phi_1(\phi_3^{-1}(z)) = z^2$ and 
$\phi_2(\phi_3^{-1}(z)) = z^{-1}$.  Both of these maps are analytic in 
their domains.  Thus we have made $X$ into a Riemann surface with the 
given charts.  

Define \[
f_X(z) = 
\begin{cases}
z &\text{ for $z \in \D$ or $z \in \partial \D$}\\
z^2 &\text{ for $z \in \D^*$ or $z \in \partial \D^*$}.
\end{cases}
\]
Then $f_X$ is analytic from $X$ into $\widehat{\C} \setminus \{1\}$.  
To see this, note that 
$$f_X(\phi_1^{-1}(z)) = z, \quad f_X(\phi_2^{-1}(z)) = 1/z^2, \quad f_X(\phi_3^{-1}(z)) = z^2.$$
Also, $f$ has valence $1$ on $\D$, valence $2$ on $\D^*$, and
valence $1$ on $\partial \D \setminus \{1\}$.

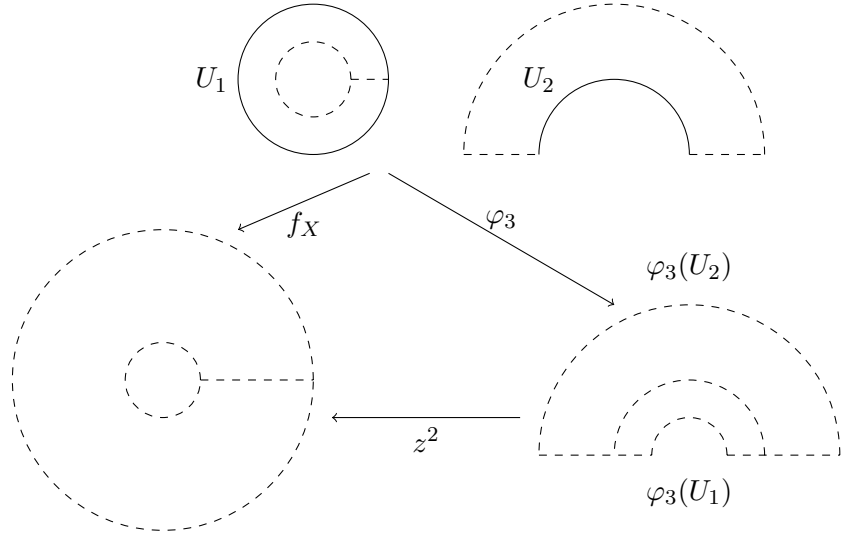
\begin{figure}
\begin{tikzpicture}
\draw[dashed] (0,1) circle (.5cm) (.5,1)--(1,1);
\draw (0,1) circle (1cm) (-1,1) node[left]{$U_1$};
\draw (5,0) arc (0:180:1);
\draw (3,1) node{$U_2$};
\draw[dashed] 
     (6,0) arc (0:180:2)
     (5,0)--(6,0)
     (2,0)--(3,0);
\draw[dashed] (6,-4) arc (0:180:1)
     (7,-4) arc (0:180:2)
     (5.5,-4) arc (0:180:.5)
     (3,-4)--(4.5,-4)
     (5.5,-4) -- (7,-4);
\draw[dashed] (-2,-3) circle (.5cm)
   (-2,-3) circle(2cm)
   (-1.5,-3)--(0,-3);
\draw[->] (.75,-.25)--(-1,-1) node[midway,right,below]{$f_X$};
\draw[->] (1,-.25)--(4,-2) node[midway,right,above]{$\phi_3$};
\draw[->] (2.75,-3.5) -- (.25, -3.5) node[midway,below]{$z^2$};
\draw (5,-1.5) node {$\phi_3(U_2)$};
\draw (5,-4.5) node {$\phi_3(U_1)$};
\end{tikzpicture}
\caption{\label{figchart} An illustration of $\phi_3$ and 
$f_X$ on $U$, from Example \ref{ex:twodisctree}.}
\end{figure}

\end{Example}

We now define some types of graphs which we need to state the main 
result. 
A {\itshape plane valence tree}
is a graph that is a tree. To each node is associated a
label of either $\C_+$ or $\C_-$ and 
positive integer $m$, called the valence.  A node with label
$\C_+$ may only be adjacent to nodes labeled $\C_-$, and nodes
labeled $\C_-$ may only be adjacent to nodes labeled
$\C_+$.  To each edge is associated 
an open interval in $\R$.  The interval may be all of
$\R$ but may not be empty.  We make the requirement that
a disjoint union of all the intervals on edges coming from a node is a 
subset of a disjoint union of $m$ copies of $\R$, 
where $m$ is the valence of the node.  
We require that some node has the property that 
a disjoint union of $m$ copies of $\R$, 
where $m$ is the valence of the node, 
contains a disjoint union of all the intervals on edges coming from the node, 
as well as another open interval in $\R$.  We say that such a node
has a free interval.  
For a plane valence tree, the valence of a point in $\C_+$ is the sum of 
the valences of the $\C_+$ nodes, and similarly for $\C_-$.  For a point 
in $\R$, it is the number of times it appears in an edge of the valence 
tree.

Define
$$\psi = -i \frac{z + 1}{z - 1}.$$ Then
$\psi$ maps $\D$ to $\C_+$ and $\D^*$ to $\C_-$ and
$\R$ to $\partial \D \setminus \{1\}$.
We may form a {\itshape disk valence tree}
by mapping $\C_+$ and $\C_-$ and $\R$ in the 
labeling of the valence tree 
to $\D$ and $\D^*$ and $\partial \D \setminus \{1\}$
under $\psi^{-1}$.

\begin{Example}
\begin{figure}[ht]
\begin{tikzpicture}[hp/.style={rectangle,draw},
sibling distance=10em
  ]
  \node[hp] {$\C_+$: 2}
    child { node[hp] {$\C_-:5$} 
      edge from parent
      node[left]{$(0,1)\ $}}
    child { node[hp] {$\C_- : 2$}
      child {node[hp] {$\C_+ : 1$}
        child {node[hp] {$\C_- : 1$}
          edge from parent
          node[right]{$\ (7,8)$}
        }
        child {node[hp] {$\C_- : 1$}
          edge from parent
          node[right]{$\ (9,10)$}
        }
        edge from parent
        node[right]{$(-3,5)$}
      }
      edge from parent 
      node[right]{$\ (-3,5)$}
    }
    ;
\end{tikzpicture}
\caption{\label{fig:expv}An example plane valence tree.}
\end{figure}
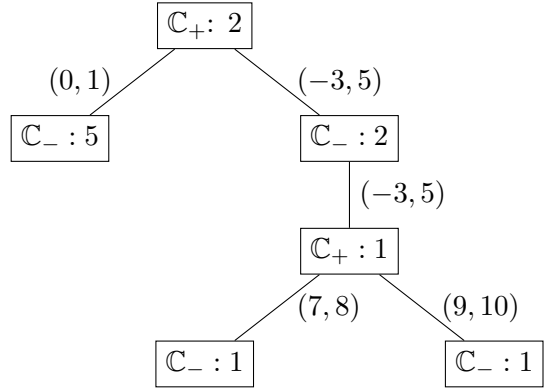
Figure \ref{fig:expv} is an example of a plane valence tree.
By Theorem \ref{thm:VT} (see below), 
there is a real Smirnov function with valence $3$ on 
$\C_+$, valence $9$ on $\C_-$, valence $3$ on $(0,1)$, valence 
$2$ on $(-3,0] \cup [1,5)$, valence $1$ on $(7,8)$ and $(9,10)$, 
and valence $0$ elsewhere. 

\end{Example}

\begin{Theorem}\label{thm:VT}
The valence of every real Smirnov function with finite valence is given 
by the valence of a plane valence tree, and any valence arising from a 
plane valence tree is the valence of a real Smirnov function.
\end{Theorem}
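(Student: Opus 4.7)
The theorem pairs two directions, which I will prove by transiting through the intermediate notion of a disk tree as developed in the paper.

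For the backward direction (plane valence tree $\Rightarrow$ real Smirnov function), my plan is: given a plane valence tree $T$, construct the associated disk tree $X$ by replacing each $\C_+$-node of valence $k$ by a copy of $\D$ of valence $k$, each $\C_-$-node by a copy of $\D^*$ of valence $k$, and welding admissible arcs whose image arcs are the $\psi^{-1}$-images of the prescribed intervals. The uniformization argument already given in the paper makes $X$ conformally equivalent to $\D$ via some $\alpha:\D\to X$, and I set $\phi:=\psi\circ f_X\circ \alpha$. Since $f_X$ sends the interior of each copy of $\D$ or $\D^*$ into $\D$ or $\D^*$ inside $\widehat{\C}$ (in particular, no interior point is sent to $1$), the function $\phi$ is holomorphic on $\D$ with values in $\C_+\cup\C_-$, so it has no poles in $\D$. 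Its boundary values on the free arcs of $X$ lie in $\T\setminus\{1\}$ and thus, after $\psi$, in $\R$; hence $\phi$ has real boundary values a.e.\ on $\T$. A direct count in each copy of $\D$ or $\D^*$ shows that the valences of $\phi$ on $\C_\pm$ and on $\R$ agree with those of $T$. Finally, applying Fatou's theorem (as in the proof of Proposition \ref{oriuioewur}(iii)) to the finite-valence function $g:=(\phi-i)/(\phi+i)$, which is meromorphic on $\D$ and unimodular a.e.\ on $\T$, produces $g=B_1/B_2$ for finite Blaschke products $B_1,B_2$; then $\phi=i(B_1+B_2)/(B_1-B_2)$ with $B_1-B_2$ outer (a rational function with no interior zeros), so $\phi\in N^+_\R$.

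For the forward direction (real Smirnov function $\Rightarrow$ plane valence tree), my plan is to reverse-engineer the tree from the geometric structure of $\phi$. By Proposition \ref{oriuioewur}, $\phi$ is rational with valence $m=\deg B_2$ on $\C_+$ and $n=\deg B_1$ on $\C_-$. The preimage $\phi^{-1}(\R\cup\{\infty\})\cap\D$ is a union of real-analytic arcs (possibly meeting at interior critical points of $\phi$ whose critical values are real), and its complement decomposes $\D$ into connected components, each of which is a branched cover of $\C_+$ or $\C_-$ under $\phi$. I build the graph whose nodes are these components (labeled $\C_\pm$ according to the image and carrying the degree of the cover as valence) and whose edges are the shared real-analytic boundary arcs (labeled by their $\phi$-images, open intervals in $\R$). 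Simple-connectedness of $\D$ forces the graph to be a tree; the alternating-color and interval-subset-of-$m$-copies-of-$\R$ axioms follow from the local analysis at each arc (interior critical points on welding arcs are absorbed into the valence of a single adjacent node, so multiple shared arcs land in distinct copies of $\R$); and every node has a free interval because each component must touch $\T$ (otherwise its closure would be a compact subset of $\D$ on which $\phi$ is both continuous and has unbounded image, a contradiction).

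The main obstacle is the bookkeeping at interior critical points of $\phi$. Naively, a critical point of order $k$ lying on $\phi^{-1}(\R)\cap\D$ is a junction of $k$ local arcs and $2k$ sectors, and if each sector were placed in a distinct node the resulting $2k$-cycle would violate the tree property. The resolution is that the sectors must in fact be grouped into fewer nodes whose higher valence exactly accounts for the local $k$-fold branching, so that the several shared arcs at the junction correspond to edges lying in different copies of $\R$ attached to the same higher-valence node. Verifying that this grouping is always possible and that the resulting plane valence tree faithfully reproduces $v_\phi$ (with multiplicities, as required by the proof of Proposition \ref{oriuioewur}) is the main technical content; once it is in place, checking the axioms of a plane valence tree and matching valences on $\C_\pm$ and on $\R$ is bookkeeping within the disk-tree / uniformization framework already set up in the paper.
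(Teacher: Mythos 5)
Your backward direction is essentially the paper's argument (disk tree, uniformization, $\psi\circ f_X\circ\alpha$), apart from two small slips: $f_X$ maps welded arcs, which are interior points of $X$, into $\partial\D\setminus\{1\}$, so $\phi$ does take real values at some interior points of $\D$; and the claim that the boundary values of $\phi$ land on the free arcs presupposes boundary regularity of the conformal map $\alpha$, which is exactly what the covering-map/radial-limit argument of the paper (the one used to prove \eqref{oos9d9}, applied off the capacity-zero set $\{i,-i\}$) is there to supply. Your closing appeal to Fatou's theorem for $g=(\phi-i)/(\phi+i)$ cannot be made until the real boundary values are already established, so it does not shortcut that step.

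The forward direction, however, has a genuine gap, and you have located it yourself: you propose nodes equal to the individual components of $\D\setminus\phi^{-1}(\R\cup\{\infty\})$, observe that this graph would contain a cycle around every interior critical point with real critical value, assert that ``the sectors must in fact be grouped into fewer nodes,'' and then declare the verification to be ``the main technical content'' without carrying it out. That verification is the proof. The paper resolves it with a specific device: starting from one upper region, it forms the \emph{upper collection} --- the union of all upper regions reachable by repeatedly adjoining upper regions sharing boundary points (branch points) inside $\D$ --- takes this whole collection as a single node whose valence is the sum of the valences of its constituents, shows by the maximum principle that the collection is simply connected and that its complement is a union of simply connected sets each meeting it along a single crosscut $\gamma$ running from $\T$ to $\T$ (so each complementary piece attaches by exactly one edge, which is what makes the graph a tree), handles each complementary piece by induction on the number of regions, and uses the argument principle (the boundary of a valence-$M$ collection covers $\R\cup\{\infty\}$ exactly $M$ times, monotonically along each $\gamma$) to verify the axiom that the edge intervals fit disjointly into $M$ copies of $\R$. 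None of this is in your proposal. Your free-interval argument is also off: the definition only requires \emph{some} node to have a free interval, and a component merely touching $\T$ (possibly at a single point) does not produce one; what is needed, and what the paper uses, is that some region has a boundary \emph{arc} on $\T$.
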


The proof of Theorem \ref{thm:VT} needs the following valence result. 
It is an exercise in \cite{Hayman} (Example 3.1 of Section 3.3), but 
for the sake of completeness we give the proof. 

\begin{Lemma}\label{poirt9fdg}
Let $f$ be an analytic function in $\D$ of valence at most $m$.
Then $f \in H^p$ for every $p \in (0, \tfrac{1}{2 m})$. 
\end{Lemma}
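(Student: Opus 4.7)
The plan is to combine the Hardy-Stein identity with a change of variables exploiting the $m$-valence (yielding a Prawitz-type integral-means inequality) together with the classical distortion bound for $m$-valent functions.

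First I reduce to the case $f(0) = 0$ by replacing $f$ with $f - f(0)$: this preserves the valence bound and, since $p < 1/(2m) \leq 1/2$ lies in $(0,1)$, preserves $H^p$-membership via $|a - c|^p \leq |a|^p + |c|^p$. Let $M(\rho) := \max_{|z|=\rho}|f(z)|$. The Hardy-Stein identity then gives
\[
M_p(r,f)^p \,=\, \frac{p^2}{2\pi}\int_{|z|<r} |f(z)|^{p-2}\,|f'(z)|^2 \,\log(r/|z|)\,dA(z).
\]
I change variables via $w = f(z)$: the pushforward of $|f'|^2\,dA$ under $f$ is a measure on $\C$ whose density is the counting function $N(w)$, and $N(w) \leq m$ by hypothesis. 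For any preimage $z$ of $w$, we have $|z| \geq M^{-1}(|w|) =: \rho_w$, so $\log(r/|z|) \leq \log(r/\rho_w) = \int_{\rho_w}^r d\rho/\rho$. Switching the order of integration then yields the Prawitz-type bound
\[
M_p(r,f)^p \,\leq\, m p \int_0^r M(\rho)^p \,\frac{d\rho}{\rho}.
\]

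Second, I invoke the classical $m$-valent distortion theorem, which says that under $f(0)=0$ there exists a constant $K=K(f)$ with $M(\rho) \leq K(1-\rho)^{-2m}$ for $0 \leq \rho < 1$ (this generalizes Koebe's $(1-\rho)^{-2}$ bound for univalent maps). Combined with the fact that $M(\rho) = O(\rho^k)$ near $\rho = 0$, where $k \geq 1$ is the order of the zero of $f$ at $0$, the integrand $M(\rho)^p/\rho$ is controlled at both endpoints: near $\rho = 0$ it is $O(\rho^{kp-1})$, always integrable, and near $\rho = 1$ it is $O((1-\rho)^{-2mp})$, which is integrable precisely when $2mp < 1$. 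Thus for $p < 1/(2m)$ we obtain $\sup_{r<1} M_p(r,f)^p < \infty$, i.e., $f \in H^p$.

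The main obstacle in filling this in completely is the classical $(1-\rho)^{-2m}$ distortion estimate for $m$-valent functions (proved by iterating the Koebe estimate along chains of local univalent inverses). The Hardy-Stein plus change-of-variables derivation of the Prawitz-type inequality is then routine, with the key geometric input being the bound $\log(r/|z|) \leq \log(r/\rho_w)$ together with $N(w) \leq m$.
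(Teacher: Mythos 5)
Your proposal is correct and follows essentially the same route as the paper: both arguments combine a Prawitz--Hayman type integral means inequality $M_p(r,f)^p \lesssim \int_0^r M_\infty(\rho,f)^p\,\rho^{-1}\,d\rho$ with Cartwright's growth estimate $M_\infty(\rho,f)=O((1-\rho)^{-2m})$ for $m$-valent functions, and then observe that the integral converges exactly when $2mp<1$. The only difference is cosmetic: you rederive the integral means inequality from the Hardy--Stein identity and the change of variables $w=f(z)$, whereas the paper cites it directly as Theorem 3.2 of Hayman, and both treatments take the distortion bound as a classical input.
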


\begin{proof}
Recall the definition of the $p$-integral means $M_{p}(r, f)$ from \eqref{Mp} and define 
$$M_{\infty}(r, f) := \sup_{|z| = r} |f(z)|.$$
By \cite[Thm.~1]{Cartwright} (see also \cite[Sec.~2.3]{Hayman}) we have 
\begin{equation}\label{minty}
M_\infty(r,f) = O\left(\frac{1}{(1 - r)^{2 m}}\right).
\end{equation}
From \cite[Theorem 3.2]{Hayman} we see that if $f$ is $m$-valent and 
$0 < r_0 < r < 1$ then 
\[
M_p(r,f) \leqslant M_\infty(r_0,f)^p + m \max\Big(m,\frac{m^2}{2}\Big) 
  \int_{r_0}^r \frac{M_\infty(t,f)^p}{t} \, dt.
\]
Applying the estimate in \eqref{minty} for $M_\infty(r,f)$ shows that the function 
$$r \mapsto M_p(r,f)$$ is bounded when $p \in (0, \tfrac{1}{2 m})$, i.e., $f \in H^p$ for all $p \in (0, \frac{1}{2m})$.
\end{proof}

\begin{proof}[Proof of Theorem \ref{thm:VT}]
Given a plane valence tree, form the associated (disk) valence tree 
under the mapping $\psi^{-1}$. 
Construct a disk tree $X$ where nodes labeled $\D$ with valence $m$ correspond 
to disks of valence $m$, and similarly for $\D^*$.  
If $Y$ is a copy of $\D$ or $\D^*$ with valence $m$,
consider the arcs labeling edges
connected to the corresponding node.
Call these arcs $I_1, \ldots, I_k$.  
The disjoint union of these arcs
are a subset of a disjoint union of $m$ copies of
$\partial \D \setminus \{1\}$.
This means that we can find a disjoint set of admissible arcs on
the boundary of $Y$ whose image arcs are precisely the arcs
$I_1, \ldots, I_k$.

Given copies of $\D$ and $\D^*$ with corresponding nodes connected by an edge, 
weld them together
together on arcs with image arcs equal to the arc labeling the 
edge between them.  
This is possible by the above remarks.
Since the valence tree has the free arc property
and is a tree, $X$
will have a free arc on some copy of $\D$ or $\D^*$ contained
in it and will be a disk tree.  

Let $\phi$ be a conformal map from $\D$ onto $X$.  
The map $g = \psi \circ f_X \circ \phi$ has valence 
equal to the valence of the plane valence tree.  Since $g$ has finite 
valence, it is in the Smirnov class by Lemma \ref{poirt9fdg}. 
Every 
point in 
$\C \setminus (\R \cup \{i,-i\})$ has a neighborhood $U$ such that 
$g^{-1}(U)$ consists of disjoint sets that are each homeomorphic to $U$ 
under $f$.
The reason that we exclude $i$ and $-i$ is that $i=\psi(0)$ and
$-i=\psi(\infty)$, and the points $0$ and $\infty$ may be the image under
$f_X$ of points where $f_X$ has zero derivative. 
The fact that $\{i,-i\}$ has zero capacity together with
the same argument 
used to prove \eqref{oos9d9} shows that $g$ has real boundary values almost 
everywhere.
We mention in passing that, in fact, since $g$ has finite 
valence and thus is rational (by the 
result from \cite{Helson}), it
has real boundary 
values everywhere, except for a finite number of points where it has
$\infty$ as a boundary value.

For the other direction, suppose that $f$ is real Smirnov with finite valence. 
Then $f$ is rational and thus continuous on $\mathbb{D}$, when viewed as a map 
into $\widehat{\C}$. 

Consider $f^{-1}(-\infty, \infty)$.  This is a set of branched analytic 
arcs in $\D$, with endpoints only at $\D$ and branch points only at 
points where $f'$ is zero.  Each branch point has an even number of analytic 
arcs coming from it. 
Also, $f^{-1}(\C_+)$ is a finite disjoint union of open sets we will call 
upper regions (similarly for $\C_-$ and lower regions). 
By the maximum principle for harmonic functions 
applied to the real part of $f$,
no upper region can have more than two of 
the analytic arcs going into a branch point as boundary arcs. 
Also by the maximum principle, no two upper regions can share a boundary curve, 
and each upper region is simply connected.  
Given an upper region, note that its boundary curve is mapped into 
$\mathbb{R}$, and so by the argument principle each point in 
$\C_+$ has the same valence $m$ under $f$ restricted to the upper region.  
This means the boundary of the upper region must contain some point 
that maps to $\infty$.
The same applies to lower regions.
By the orientation preserving properties of analytic functions, each
upper region maps its boundary (considered as positively oriented) to 
the real line in a way so that moving along the boundary increases the 
value on the real line (except at $\infty$).  A lower region has 
the opposite property.  
Thus, by the argument principle, the boundary of an upper or lower region 
must map onto $\R \cup \{\infty\}$
exactly $m$ times. 
We give Figures \ref{fig:discregiona} and \ref{figb} 
for illustration. 

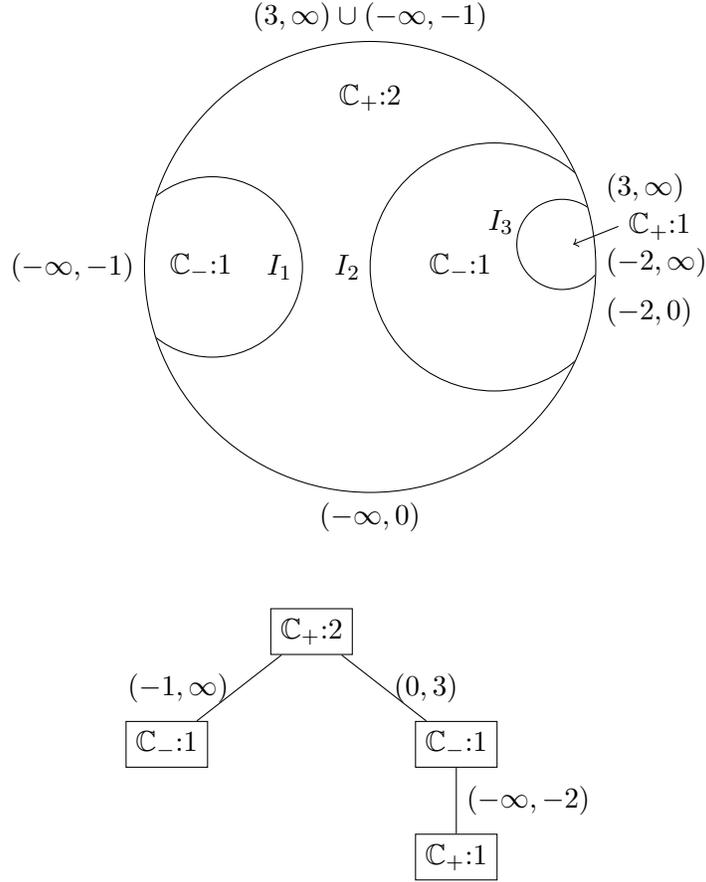
\begin{figure}
\begin{tikzpicture}[scale = 3.0]
\draw (.55,0) circle (.55cm);
\draw (-.7,0) circle (.4cm);
\draw (.85,.1) circle (.2cm);
\draw (0,.75) node{$\C_+$:2};
\draw (-.75,0) node{$\C_-$:1};
\draw (.4,0) node{$\C_-$:1};

\filldraw[color=white, even odd rule] (0,0) circle (1cm)
    (-1.5,-1.5) -- (-1.5,1) -- (1.5,1) -- (1.5,-1.5) -- cycle;
\draw (0,0) circle (1cm);
\draw[->] (1.1,.18) node[right]{$\C_+$:1} -- (.9,.1); 
\draw (-1,0) node[left]{$(-\infty,-1)$};
\draw (0,-1) node[below]{$(-\infty,0)$};
\draw (1,-.2) node[right]{$(-2,0)$};
\draw (1,.02) node[right]{$(-2,\infty)$};
\draw (1,.35) node[right]{$(3,\infty)$};
\draw (0,1) node[above]{$(3,\infty)\cup(-\infty,-1)$};
\draw (-.3,0) node[left]{$I_1$};
\draw (0,0) node[left]{$I_2$};
\draw (.68,.2) node[left]{$I_3$};
\end{tikzpicture}

\begin{tikzpicture}[hp/.style={rectangle,draw},
sibling distance=10em
  ]
  \node[hp]{$\C_+$:2}
  child { node[hp] {$\C_-$:1}
  edge from parent node[left]{$(-1,\infty)$}}
     child { node[hp] {$\C_-$:1} 
       child { node[hp] {$\C_+$:1} 
       edge from parent node[right]{$(-\infty,-2)$}
       }
       edge from parent node[right]{$(0,3)$}
     }
       ;
\end{tikzpicture}

\caption{\label{fig:discregiona}$I_1=(-1,\infty)$, $I_2=(0,3)$, $I_3=(-\infty,-2)$.
Valence is $3$ on $\C_+$, $2$ on $\C_-$, $2$ on $(0,3)$, $1$ on  
$(-1,0]$ and $[3,\infty)$, 
and $1$ on $(-\infty,-2)$. Note that as we proceed counterclockwise 
around the boundary of an upper region and 
clockwise around the boundary of a lower region, the 
image under $f$ increases on the real line.}
\end{figure}

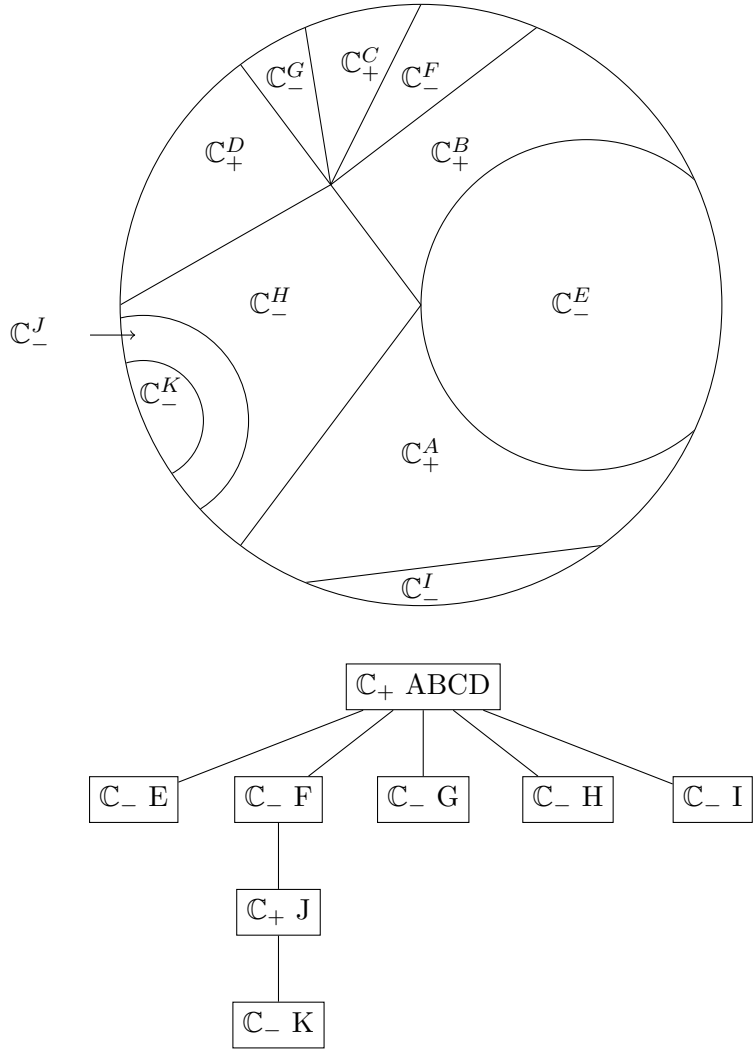
\begin{figure}
\begin{tikzpicture}[scale = 4.0]

\draw (.55,0) circle (.55cm);
\draw (0,0) -- (-.6,.8);
\draw (0,0) -- (-.6,-.8);
\draw (-.3,.4) -- (0,1);
\draw (-.3,.4) -- (5/13,12/13);
\draw (-.3,.4) -- (-1,0);
\draw (-.3,.4) -- (-5/13,12/13);
\draw (-12/13,-5/13) circle (.2cm);
\draw (-12/13,-5/13) circle (.35cm);
\draw (-5/13,-12/13) -- (3/5,-4/5);
\filldraw[color=white, even odd rule] (0,0) circle (1cm)
    (-1.5,-1.5) -- (-1.5,1) -- (1.5,1) -- (1.5,-1.5) -- cycle;
\draw (0,0) circle (1cm);

\draw (.5,0) node{\bfseries $\C_-^E$};
\draw (-.5,0) node{\bfseries $\C_-^H$};
\draw (0,-.5) node{\bfseries $\C_+^A$};
\draw (.1,.5) node{\bfseries $\C_+^B$};
\draw (-.2,.8) node{\bfseries $\C_+^C$};
\draw (0,.75) node{\bfseries $\C_-^F$};
\draw (-.45,.75) node{\bfseries $\C_-^G$};
\draw (-.65,.5) node{\bfseries $\C_+^D$};
\draw (0,-.95) node {\bfseries $\C_-^I$};
\draw (-.865,-.3) node {\bfseries $\C_-^K$};
\draw (-1.3,-.1) node {\bfseries $\C_-^J$};
\draw[->](-1.1,-.1)--(-.95,-.1);

\end{tikzpicture}

\vspace{-.5in}
\begin{tikzpicture}[hp/.style={rectangle,draw},
sibling distance=5em
  ]
  \node[hp]{$\C_+$ ABCD}
    child { node[hp] {$\C_-$ E } }
    child { node[hp] {$\C_-$ F } 
      child { node[hp] {$\C_+$ J } 
        child { node[hp] {$\C_-$ K  }}} 
      }
    child { node[hp] {$\C_-$ G } }
    child { node[hp] {$\C_-$ H } }
    child { node[hp] {$\C_-$ I } }
    ;
\end{tikzpicture}

\caption{\label{figb} The disk with upper and lower regions, and the 
corresponding valence tree.  Upper regions are labeled $\C_+$ and lower 
regions are labeled $\C_-$. Edges and valences are not labeled.  
Note that, if 
we start with upper region A, we obtain the upper collection with 
A, B, C, and D as the first node.}
\end{figure}

We will now give a method that, given a finite valence real Smirnov function, 
constructs a plane valence tree such that the valence of the tree 
corresponds to the valence of the function.  
We proceed by induction on the number of regions.  
If there is only one region, and this region has valence $m$, construct 
a plane valence tree with only one node of valence $m$. 

Suppose there is more than one region.
We may replace upper regions by lower regions in the following argument if 
needed.  
Take an upper region, and consider all 
upper regions sharing common boundary points 
{\itshape inside the unit disk} with the given region.  
Take the union, including the boundary points and boundary arcs.  
Repeat the process for all the new upper regions added, and continue 
until it is no longer possible to do so.  This forms a finite union of 
upper regions (and their boundaries) -
call it $X$, and call it an upper collection.  
By the maximum principle for harmonic functions, $X$ is simply 
connected.  
$X$ is not the whole disk by our assumption. 

The 
boundary of $X$ must intersect the boundary of the disk since some point in 
the boundary of each upper region maps to $\infty$.  
Thus, the complement of $X$ consists of a union of simply connected 
sets.
For the $j^{th}$ set, let $\phi_j$ be the conformal 
map from the unit disk onto this set. By the induction hypothesis, 
we may form a valence tree for the $j^{th}$ set, using the function 
$f \circ \phi_j$ instead of the function $f$, and by starting with 
a lower region instead of an upper region.
(We could also apply the above reasoning directly to the $j^{th}$
set without using the conformal map $\phi_j$.)
Let $T_j$ denote the valence tree for the $j^{th}$ set.  
Now suppose the total valence of 
all components of $X$ is $M$.  Draw a $\C_+$ node with valence $M$.  
  Draw edges from the 
node for $X$ to the nodes in the trees $T_j$ 
that correspond to lower collections sharing boundary arcs with $X$.  
There is at least one edge to each $T_j$ because every $T_j$ shares a 
boundary arc with $X$. 
We will later see that there is exactly one edge to each $T_j$. 
Label each arc's edges with the intervals corresponding to the values of 
$f$ on the edges.  

We will show the graph formed is a tree.  
Consider an analytic arc in $f(-\infty,\infty)$ that approaches the 
boundary of the circle and is part of the boundary of the upper collection.  
If we start from a point in the arc that is on the unit
circle and follow the arc, 
it either terminates at another point of the circle, or at a branch 
point.  If this is the case, some other arc going from the branch point 
must be the boundary of the same lower region as the original arc; 
follow the new arc.  We may continue until we hit the boundary of the 
unit circle, which we must since the arc can never approach the same branch 
point again, and there are finitely many branch points.
Call the combination of arcs $\gamma$.  
The combination of arcs $\gamma$  
will be part of the boundary of some lower region, call it $L$. 
The  lower region $L$ can only have a common boundary with the 
upper collection $X$ 
along $\gamma$, since the upper collection $X$ is connected 
and $\gamma$ intersects the circle at its two ends.   
The component of the complement of the upper collection $X$ that 
contains $L$ can have common boundary with $X$ only on 
$\gamma$, for the same reason.   
This shows that the graph we form is a tree.  

The arc $\gamma$ (not counting points on the unit circle) maps to 
some subset of 
$(-\infty,\infty)$ once.  This follows since as a point travels along 
$\gamma$, the image of the point always increases on the real line, 
or always decreases, since $\gamma$ is part of the boundary of a 
lower region.  
This shows each edge is labeled with an interval in $\mathbb{R}$. 
Since the boundary of the upper collection maps onto
$\R \cup \{\infty\}$ exactly $M$ times, the disjoint
union of all the intervals labeling edges coming from the
upper collection is contained in
a disjoint union of $M$ copies of $\R$.

Note that 
there is some node
(say of valence $m$) such that m disjoint copies of $\R$ minus 
the disjoint union of its edges contains an (open) interval. 
This follows from the 
fact that some upper or lower region must have a boundary arc in 
common with the unit disk.
Thus, the graph we construct is a plane 
valence tree.
The valence of the 
tree is the same as that of $f$ by construction. 
\end{proof}

Note that if a holomorphic function has bounded finite valence on $\C_+$ and
$\C_-$, it must have finite valence on $\R$ by the open mapping theorem.

We now give some examples.

\begin{Example}\label{sdfjfdg}
Let $n \geqslant 1$ and 
$$(a_1, b_1), (a_2, b_2), \ldots, (a_n, b_n)$$ be a finite set of open intervals such that none of the intervals is the entire real line and 
$(a_j,b_j)$ is disjoint from $(a_{j+1}, b_{j+1})$ for each $j$.  Then there 
is a function from $N_{\R}^{+}$ whose range is 
\[
\bigcup_{j=1}^n (a_j, b_j) \cup \C_+ \cup \C_-.
\]
Moreover, the valence of each point of $\C_+$ is $\lfloor n/2 \rfloor + 1$ and 
the valence of each point of $\C_-$ is $\lceil n/2 \rceil$.  The valence 
of each point in $\R$ is equal to the number of the intervals 
$(a_j, b_j)$ in which it lies.
\end{Example}

Clearly we can interchange the roles of $\C_+$ and $\C_-$ in the above 
example.

We could deduce this from our previous theorem, but we will first give 
an independent proof that is simpler than the proof of the previous 
theorem. 
\begin{proof}%
Construct a Riemann surface as follows.  Weld a copy of 
$\C_+$ to $\C_-$ along the interval $(a_1, b_1)$.  
Now weld the copy of $\C_-$ to a different copy of 
$\C_+$ along the interval $(a_2, b_2)$. 
Now weld this copy of $\C_+$ to a different copy of 
$\C_-$ along the interval $(a_3, b_3)$.  
 Proceed in this manner 
until all of the intervals are exhausted. 
Call this Riemann surface $X$. 
Let $\theta$ be the 
projection map from $X$ to $\C$ that takes a given point 
in the Riemann surface to the corresponding point in either 
$\C_+$, $\C_-$, or $\R$.

We now claim that the Riemann surface is conformally equivalent to $\D$.  If it were not, then, since it is simply connected, it would be 
equivalent to either the Riemann sphere or the complex plane (uniformization theorem).  It is not 
equivalent to the sphere since it is not compact.  %
It is not equivalent to the complex plane since we could weld another 
half plane onto the last half plane welded onto
$X$ and still have a simply connected surface, and if 
we remove infinitely many points from any simply connected Riemann 
surface and are left with a simply connected surface, the new surface 
must be equivalent to the disk by the uniformization theorem and
the Riemann mapping theorem. 

Let $\phi$ be the conformal map 
from $\D$ to the Riemann surface and 
$$f =  \theta \circ \phi.$$ Then 
$f$ maps from $\D$ into $\C$.  Since $f$ has valence at most 
$\lfloor n/2 \rfloor + 1$ at each point, it belongs to some Hardy space (Lemma \ref{poirt9fdg}) and thus 
belongs to $N^+$.  
We will now show that 
$f$ has real (radial) boundary values almost everywhere. 
To see this, note that every point in 
$\C \setminus \R$ has a neighborhood $U$ such that 
$\theta^{-1}(U)$ consists of disjoint sets that are each homeomorphic to $U$ 
under $\theta$.  
Thus 
every point in 
$\C \setminus \R$ has a neighborhood $U$ such that 
$f^{-1}(U)$ consists of disjoint sets that are each homeomorphic to $U$ 
under $f$. 
The same argument as used to prove \eqref{oos9d9} shows that no (radial) boundary values of 
$f$ lie in $\C \setminus \R$.  
\end{proof}

For example, for $n=3$, 
the valence tree for this example is as shown below.
The nodes are shown with their valences. 

\begin{tikzpicture}[hp/.style={rectangle,draw},
grow = right, level distance=8em]
  \node[hp] {$\C_+ :1$}
    child { node[hp] {$\C_- : 1$}
      child {node[hp] {$\C_+ : 1$}
        child{node[hp] {$\C_- : 1$}
          edge from parent
          node[below]{$(a_3,b_3)$}
      }
        edge from parent
        node[below]{$(a_2,b_2)$}
      }
      edge from parent 
      node[below]{$\ (a_1,b_1)$}
    }
    ;
  \end{tikzpicture}

  \begin{Example}
    The only possible valence trees for a real Smirnov function with
    valence $1$ on $\C_+$ and valence $1$ on $\C_-$ have a
    $\C_+$ node of valence $1$ connected to a $\C_-$ node of
    valence $1$.  The interval labeling the edge connecting them
    can be any nonempty open interval, except $\R$, since if the
    edge was labeled $\R$ neither node would have a free interval.
    Thus the valence of such a real Smirnov function is $1$ on a
    nonempty open proper subinterval of $\R$, and $0$ elsewhere on
    $\R$.
    \end{Example}

  \begin{Example}
    Let us find all possible valences for real Smirnov functions with
    valence $2$ on $\C_+$ and valence $1$ on $\C_-$.
    The valence tree
  for such a function has either one $\C_+$ node of valence
  $2$, or two $\C_+$ nodes of valence $1$.  These nodes cannot be
  adjacent. Figure \ref{fig:vt21} shows all possible valence trees.
  In tree I, $I_1$ and $I_2$ must be disjoint; both the top and
  bottom node automatically have free intervals.
  In tree II, $I_1$ can be an arbitrary non-empty open interval.
  Because the $\C_+$ node has valence $2$ and only one edge, it
  has a free arc automatically.  
  So the valence at every point in $\R$ is either $1$ or $0$.  The range
  on $\R$ 
  is either the union of two open intervals, or is one interval.  
  
    \begin{figure}
         \begin{tikzpicture}[hp/.style={rectangle,draw},
sibling distance=3em
]
  \draw (-1,0) node{I};
  \node[hp]{$\C_+$:1}
  child { node[hp] {$\C_-$:1} 
       child { node[hp] {$\C_+$:1} 
       edge from parent node[left]{$I_2$}
       }
       edge from parent node[left]{$I_1\,$}
     }
     ;
  \draw (2,0) node{II};
  \draw(3,0) node[hp]{$\C_+$:2}
  child { node[hp] {$\C_-$:1} 
       edge from parent node[left]{$I_1\,$}
     }
     ;
   \end{tikzpicture}
   \caption{\label{fig:vt21} All possible valence tress with valence
     $2$ on $\C_+$ and $1$ on $\C_-$.}
\end{figure}
\end{Example}

\begin{Example}
  Let us find all possible valences for real Smirnov functions with
  valence $2$ on $\C_+$ and valence $3$ on $\C_-$.  The valence tree
  for such a function has either one $\C_+$ node of valence
  $2$, or two $\C_+$ nodes of valence $1$.  These nodes cannot be
  adjacent. Figure \ref{fig:vt23} shows all possible valence trees.
  Note that if all the other conditions for being a real valence tree
  are satisfied, some node in a tree must have a free interval if
  there is some node of valence one connected to at least two nodes, one of
  which connects to no other nodes.  
  If a node of valence $m$ has less than $m$ edges connected to it,
  the free interval condition is also automatically satisfied.
  These remarks apply to all the trees in Figure \ref{fig:vt23} except for
  tree VIII, which also automatically has a free interval since two of the
  intervals labeling its edges must be disjoint.  
  
  In tree I, we require $I_1$, $I_3$, and $I_4$ to be pairwise disjoint,
  and $I_1$ and $I_2$ to be disjoint.  In tree II, we require $I_1$ and
  $I_3$ to be disjoint.  The intervals $I_1$ and $I_2$ do not have to
  be disjoint, since the node they have in common has valence $2$, and
  any two intervals can fit disjointly into two copies of $\R$. 
  In tree III, we require $I_1$ and $I_3$ to be disjoint, and $I_1$ and
  $I_2$ to be disjoint.  In tree IV, the intervals $I_1$ and $I_2$ can
  be arbitrary.

  In tree V, intervals $I_1$ and $I_2$ must be disjoint, intervals
  $I_1$ and $I_3$ are disjoint, and intervals $I_3$ and $I_4$ are
  disjoint.  In tree VI, $I_1$ must be disjoint from
  $I_2$ and $I_2$ must be disjoint from $I_3$.
  In tree VII, $I_2$ must be disjoint from $I_3$.  
  
  In tree VIII, the intervals $I_1$, $I_2$, and $I_3$ must be able to fit into
  two disjoint copies of $\R$ without intersecting.  This is
  equivalent to requiring that two of them be disjoint.  In trees
  IX and X, there are no requirements on the intervals.  Note that
  for all cases, the conditions above imply that
  some node must have a free interval, and the above intervals are
  allowed to be $\R$ if this does not conflict with any of the above
  conditions.  

Considering all possible cases shows that
   the range of the function on $\R$ may be (counting
  multiplicity) %
  the union of four open intervals
  $I_1$, $I_2$, $I_3$ and $I_4$ where $I_1$ and $I_2$ are disjoint,
  $I_1$ and $I_3$ are disjoint, and $I_3$ and $I_4$ are disjoint.
  The range of the function on $\R$ (counting multiplicity) may also
  be the union of three open intervals, at least two of which
  are disjoint, or it may be the union of two open intervals; or it will
  be one open interval. These are the only cases. 
  Some of these intervals may be $\R$ if the conditions are satisfied
  (although they cannot be empty).  
  To take a concrete example, from the first case we see that the 
  range could be (counting multiplicities)
  $$(0,1) \cup (2, \infty) \cup (2, 3) \cup (4,5).$$  In other words,
  the valence would be one on $(0,1)$, two on $(2,3)$ and $(4,5)$,
  one on $[3,4] \cup [5,\infty)$, and zero on the rest
  of $\R$.

  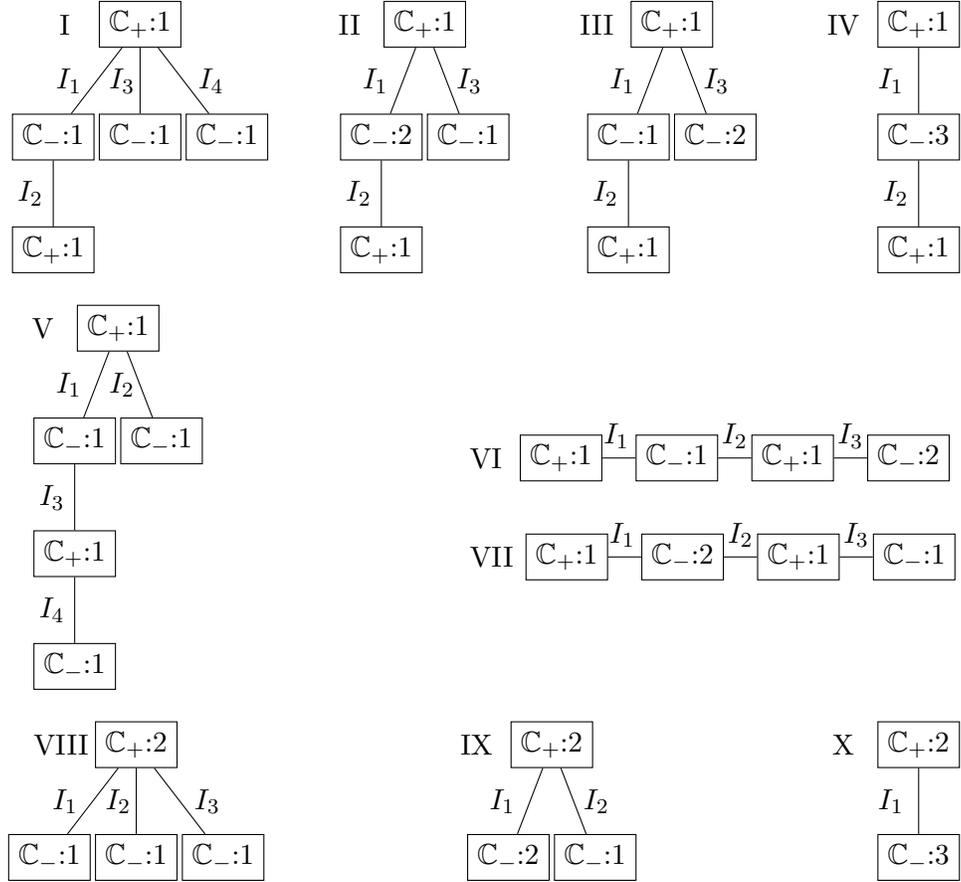
\begin{figure}[htbp!]
      \begin{tikzpicture}[hp/.style={rectangle,draw},
sibling distance=3em
]
  \draw (-1,0) node{I};
  \node[hp]{$\C_+$:1}
  child { node[hp] {$\C_-$:1} 
       child { node[hp] {$\C_+$:1} 
       edge from parent node[left]{$I_2$}
       }
       edge from parent node[left]{$I_1\,$}
     }
  child { node[hp] {$\C_-$:1}
    edge from parent node[left]{$I_3\!$}}
  child { node[hp] {$\C_-$:1}
    edge from parent node[right]{$\,I_4$}}
     ;
   \end{tikzpicture} \hfill
      \begin{tikzpicture}[hp/.style={rectangle,draw},
sibling distance=3em
  ]
  \draw (-1,0) node{II};
  \node[hp]{$\C_+$:1}
  child { node[hp] {$\C_-$:2} 
       child { node[hp] {$\C_+$:1} 
       edge from parent node[left]{$I_2$}
       }
       edge from parent node[left]{$I_1\,$}
     }
  child { node[hp] {$\C_-$:1}
    edge from parent node[right]{$I_3\!$}}
     ;
   \end{tikzpicture} \hfill
         \begin{tikzpicture}[hp/.style={rectangle,draw},
sibling distance=3em
  ]
  \draw (-1,0) node{III};
  \node[hp]{$\C_+$:1}
  child { node[hp] {$\C_-$:1} 
       child { node[hp] {$\C_+$:1} 
       edge from parent node[left]{$I_2$}
       }
       edge from parent node[left]{$I_1\,$}
     }
  child { node[hp] {$\C_-$:2}
    edge from parent node[right]{$I_3\!$}}
     ;
\end{tikzpicture}
\hfill
         \begin{tikzpicture}[hp/.style={rectangle,draw},
sibling distance=3em
]
  \draw (-1,0) node{IV};
  \node[hp]{$\C_+$:1}
  child { node[hp] {$\C_-$:3} 
       child { node[hp] {$\C_+$:1} 
       edge from parent node[left]{$I_2$}
       }
       edge from parent node[left]{$I_1\,$}
     }
     ;
   \end{tikzpicture}

   \vspace{1em}
   \begin{minipage}{.45\linewidth}
         \begin{tikzpicture}[hp/.style={rectangle,draw},
sibling distance=3em
]
  \draw (-1,0) node{V};
  \node[hp]{$\C_+$:1}
  child { node[hp] {$\C_-$:1} 
    child { node[hp] {$\C_+$:1}
      child{ node[hp] {$\C_-$:1}
        edge from parent node[left]{$I_4$}}
       edge from parent node[left]{$I_3$}
       }
       edge from parent node[left]{$I_1\,$}
     }
  child { node[hp] {$\C_-$:1}
    edge from parent node[left]{$I_2\!$}}
     ;
   \end{tikzpicture} \hfill
   \end{minipage}
\begin{minipage}{.5\linewidth}
   \begin{tikzpicture}[hp/.style={rectangle,draw},grow=right,
     level distance = 4em, 
sibling distance=5em
  ]
  \draw (-1,0) node{VI};
  \node[hp]{$\C_+$:1}
  child { node[hp] {$\C_-$:1} 
    child { node[hp] {$\C_+$:1}
      child { node[hp] {$\C_-$:2}
        edge from parent node[above]{$I_3$}
      }
       edge from parent node[above]{$I_2$}
       }
       edge from parent node[above]{$I_1\,$}
     }
     ;
   \end{tikzpicture}

   \vspace{1em}
   \begin{tikzpicture}[hp/.style={rectangle,draw},grow=right,
     level distance = 4em, 
sibling distance=5em
  ]
  \draw (-1,0) node{VII};
  \node[hp]{$\C_+$:1}
  child { node[hp] {$\C_-$:2} 
    child { node[hp] {$\C_+$:1}
      child { node[hp] {$\C_-$:1}
        edge from parent node[above]{$I_3$}
      }
       edge from parent node[above]{$I_2$}
       }
       edge from parent node[above]{$I_1\,$}
     }
     ;
   \end{tikzpicture}
   \end{minipage}
\hfill

\vspace{1em}
   \begin{tikzpicture}[hp/.style={rectangle,draw},
sibling distance=3em
  ]
  \draw (-1,0) node{VIII};
  \node[hp]{$\C_+$:2}
  child { node[hp] {$\C_-$:1} 
       edge from parent node[left]{$I_1\,$}
     }
  child { node[hp] {$\C_-$:1}
    edge from parent node[left]{$I_2\!$}}
  child { node[hp] {$\C_-$:1}
    edge from parent node[right]{$\,I_3$}}
     ;
   \end{tikzpicture} \hfill
      \begin{tikzpicture}[hp/.style={rectangle,draw},
sibling distance=3em
]
  \draw (-1,0) node{IX};
  \node[hp]{$\C_+$:2}
  child { node[hp] {$\C_-$:2} 
       edge from parent node[left]{$I_1\,$}
     }
  child { node[hp] {$\C_-$:1}
    edge from parent node[right]{$I_2\!$}}
     ;
   \end{tikzpicture} \hfill
         \begin{tikzpicture}[hp/.style={rectangle,draw},
sibling distance=3em
  ]
  \draw (-1,0) node{X};
  \node[hp]{$\C_+$:2}
  child { node[hp] {$\C_-$:3} 
       edge from parent node[left]{$I_1\,$}
     }
     ;
   \end{tikzpicture}
   \caption{\label{fig:vt23} All possible valence tress with valence
     $2$ on $\C_+$ and $3$ on $\C_-$.}
\end{figure}
\end{Example}

\def\cprime{$'$} \def\cprime{$'$}

\end{document}